\newtheorem*{alb-conj}{Albertson's Conjecture}
\newtheorem{prop}{Proposition}
\newtheorem{lem}[prop]{Lemma}
\newtheorem{thm}[prop]{Theorem}
\newtheorem{thmA}{Theorem}
\newtheorem*{thmTwo}{Theorem 2}
\renewcommand\ge\geqslant
\renewcommand\geq\geqslant
\renewcommand\le\leqslant
\renewcommand\leq\leqslant
\newtheorem{lemA}[thmA]{Lemma}
\newcommand\Cr{\textrm{cr}}
\newcommand\floor[1]{\lfloor#1\rfloor}
\title{Progress on Albertson's Conjecture}
\author{Daniel W. Cranston\thanks{College of William \& Mary, Department of Mathematics; \texttt{dcransto@gmail.com}}}
\begin{document}
\maketitle
\begin{abstract}
Albertson conjectured that every graph with chromatic number $r$ has
crossing number at least the crossing number of the complete graph $K_r$.  This conjecture
was proved for $r\le 12$ by Albertson, Cranston, and Fox; for $r\le 16$ by Bar\'{a}t and T\'{o}th;
and for $r\le 18$ by Ackerman.  Here we verify it for $r\le 24$; we also greatly
restrict the possibilities for counterexamples when $r\in\{25,26\}$.
In addition, we strengthen earlier work bounding the order of a minimum counterexample for each choice of $r$:
we exclude the possibility that $|G|\ge 2.82r$ and exclude the possibility that $1.228r\le |G|\le 1.768r$.
Finally, as $r$ grows, we extend the lower end of this range of excluded orders for a minimum counterexample.
In particular: if $r\ge 125{,}000$, then we exclude the possibility that $1.10r\le |G|\le 1.768r$; 
and if $r\ge 825{,}000$, then we exclude the possibility that $1.05r\le |G|\le 1.768r$.
\end{abstract}

\section{Introduction}
\label{intro-sec}
The \emph{crossing number}, $\Cr(G)$, of a graph $G$ is the minimum number of edge crossings over all plane
embeddings of $G$.  The chromatic number, $\chi(G)$, of $G$ is the minimum number of colors needed so 
that the endpoints of each edge get distinct colors.  The 4 Color Theorem states: 
If $\Cr(G)=0$, then $\chi(G)\le 4$.  It is natural to ask how quickly $\chi(G)$ can grow with $\Cr(G)$.
By letting $G$ be the complete graph $K_r$, we get $\chi(G) = \Omega(\Cr(G)^{1/4})$.  And it is 
straightforward (see the first paragraph in the proof of \Cref{first-thm}) to prove a converse: 
$\chi(G)= O(\Cr(G)^{1/4})$.  Albertson believed something stronger.
\smallskip

\begin{alb-conj}[\cite{ACF}] If $\chi(G)\ge r$, then $\Cr(G)\ge \Cr(K_r)$.
\end{alb-conj}
\smallskip

For $r\le 4$, Albertson's Conjecture is trivial, and for $r=5$ it is equivalent to the 4 Color Theorem.  
It was proved for $r\le 12$ by Albertson, Cranston, and Fox~\cite{ACF}; for $r\le 16$ 
by Barat and Toth~\cite{BT}; and for $r\le 18$ by Ackerman~\cite{ackerman}.
Another line of research focuses on bounding, for each choice of $r$, the order of a minimum hypothetical counterexample $G$, with $\chi(G)=r$ and $\Cr(G)<\Cr(K_r)$.  It was shown that the order $|G|$ of such a $G$ must satisfy $|G|\le 4r$ \cite{ACF}; $|G|\le 3.57r$ \cite{BT}; and $|G|\le 3.03r$ \cite{ackerman}.  
Recently Fox, Pach, and Suk~\cite{FPS} showed that if $\chi(G)=r$ and $|G|\le r(1.64-o(1))$, then $G$ is
not a counterexample to Albertson's Conjecture.  That is, given $\epsilon > 0$, there exists $r_{\epsilon}$
such that if $\chi(G)=r$, with $r\ge r_{\epsilon}$ and $|G|\le r(1.64-\epsilon)$, then $\Cr(G)\ge \Cr(K_r)$.
(However, the proof assumes $r>2^{70}\approx 10^{21}$, and does not
directly say anything about smaller $r$.)

In this note we progress in both of these directions: proving Albertson's Conjecture for more small 
values of $r$, and further restricting the orders of minimal counterexample for larger values of $r$.  
To state our 2 main results, we need the following definition, studied intensely by Gallai~\cite{gallai2}.  
A graph $G$ is \emph{$r$-critical} if 
$\chi(G)=r$ and $\chi(H)<r$ for every proper subgraph $H$ of $G$.  Every $r$-chromatic graph contains an 
$r$-critical subgraph $G$; so it suffices to prove Albertson's Conjecture for all $r$-critical graphs.
Below are our $2$ main results.

\begin{thm}
	\label{thm1}
	If $G$ is $r$-critical, then $\Cr(G)\ge\Cr(K_r)$ for both (a) $1.212r\le |G|\le 1.768r$ and
	(b) $2.812r \le |G|$.
\end{thm}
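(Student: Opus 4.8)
The natural setup is induction on $r$, with $r\le 18$ (from \cite{ackerman,BT,ACF}) as base cases, and with $\Cr(K_r)$ replaced wherever possible by the Guy number $Z(r):=\tfrac14\floor{r/2}\floor{(r-1)/2}\floor{(r-2)/2}\floor{(r-3)/2}$, since $Z(r)\ge\Cr(K_r)$ and so it suffices to prove $\Cr(G)\ge Z(r)$. Write $n=|G|$ and $m=|E(G)|$. Two inputs come for free: $\delta(G)\ge r-1$, and (Kostochka--Yancey) $m\ge\frac{(r+1)(r-2)n-r(r-3)}{2(r-1)}$; in particular $m/n$ is of order $r/2$, large enough that the crossing number lemma applies, $\Cr(G)\ge c\,m^3/n^2$.

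For part~(b), $n\ge 2.812r$: substitute the Kostochka--Yancey bound for $m$ into the crossing number lemma to get $\Cr(G)\ge\Psi(n,r)$ with $\Psi$ an explicit rational function. Because $\Psi(\cdot,r)$ is the cube of a linear function of $n$ divided by $n^2$, it is increasing in $n$ once $n$ passes the root of the linear factor, so it is enough to verify $\Psi(n,r)\ge Z(r)$ at $n=\lceil 2.812r\rceil$; that collapses to a single polynomial inequality in $r$, to be checked for all large $r$ and then for the finitely many small $r$ (which are anyway covered by part~(a) or the base cases). The real issue is that the off-the-shelf constant $c$ in the crossing number lemma is not small enough to reach $2.812$, so one must use a version sharpened for the high-density regime and retain the lower-order terms in Kostochka--Yancey and in $Z(r)$.

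For part~(a), $1.212r\le n\le 1.768r$: now $G$ is nearly complete --- its complement has maximum degree at most $n-r$ --- and I would not expect the crossing number lemma alone to suffice here (even with the conjecturally best constant it comes up a little short of $Z(r)$ in this window), nor a drawing-by-drawing comparison with $K_n$ (a reinserted non-edge can be forced to make many crossings in a given drawing). I would split on how far $m$ sits above the Kostochka--Yancey minimum: if it exceeds it by a fixed multiplicative margin, a sharpened crossing number lemma should close the case; otherwise $G$ is close to a Kostochka--Yancey-extremal (``$k$-Ore'') graph, and I would try to control $\Cr$ for that restricted family directly --- or, alternatively, run an averaging bound $\Cr(G)\ge\binom{n-4}{t-4}^{-1}\sum_{|T|=t}\Cr(G[T])$, using that each $G[T]$ with $T\subsetneq V(G)$ has $\chi\le r-1$ and hence, by the inductive hypothesis, $\Cr(G[T])\ge\Cr(K_{\chi(G[T])})$, together with $\chi(G[T])\ge r-(n-t)$ and --- crucially --- a quantitative improvement of that estimate for most $t$-subsets.

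The crux is exactly that last improvement. With only $\chi(G[T])\ge r-(n-t)$ --- equivalently, with only the bare Kostochka--Yancey and crossing-number-lemma estimates --- everything merely breaks even: the extremal configurations are joins $K_{r-c}+H$ with $H$ small and sparse, and these sit precisely on the line $\Cr(G)=\Cr(K_r)$. Getting a strict inequality forces one to use finer structure of critical graphs (Gallai's description of the Gallai trees at low-degree vertices, the Kostochka--Yancey classification of near-extremal critical graphs), and the strength of that input is what decides how far below $1.768r$ one can push, why the reachable range grows as $r\to\infty$, and why the window $(1.768r,2.812r)$ resists both the dense and the sparse estimates. I expect this to be the main obstacle, with part~(b) a comparatively routine --- if delicate --- optimization.
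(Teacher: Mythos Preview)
Your proposal misses the two concrete ingredients the paper actually uses, and the inductive/structural programme you outline for part~(a) is both unnecessary and does not obviously close.

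For part~(a), the decisive input is not Kostochka--Yancey but Gallai's much sharper edge bound for $r$-critical graphs with $r+2\le n\le 2r-1$, namely $m\ge\tfrac12\bigl((r-1)n+(n-r)(2r-n)-2\bigr)$. The extra quadratic term $(n-r)(2r-n)$ is precisely what turns your ``merely breaks even'' diagnosis into a strict gain, with no induction, no $k$-Ore classification, and no Gallai-tree analysis (note that this is a different result of Gallai than the low-degree-vertex one you invoke). The second ingredient, common to both parts, is not the $m^3/n^2$ form of the crossing lemma but the \emph{linear} bound $\Cr(H)\ge 5|E(H)|-\tfrac{203}{9}(|H|-2)$, applied to every $k$-vertex induced subgraph and then averaged:
\[
\Cr(G)\ \ge\ 5m\,\frac{(n-2)(n-3)}{(k-2)(k-3)}\ -\ \frac{203\,n(n-1)(n-2)(n-3)}{9\,k(k-1)(k-3)}.
\]
Substituting Gallai's edge bound and taking $k\in\{12,15,19\}$ covers three overlapping sub-intervals of $\alpha=n/r$ spanning roughly $[1.23,1.77]$; the remaining work is a routine polynomial verification in $(\alpha,r)$. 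Your averaging-over-subsets idea is close in form, but lower-bounding $\Cr(G[T])$ via an inductive $\Cr(K_{\chi(G[T])})$ is the wrong lever: induced subgraphs of critical graphs are not critical, and the ``quantitative improvement for most $T$'' you need is exactly the unsupplied hard part. One instead bounds $\Cr(G[T])$ directly from its edge count.

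For part~(b) your instinct is right that the constant $27.48$ in $m^3/n^2$ only reaches $\alpha\approx 3.435$, but the ``sharpened version for the high-density regime'' you reach for is again just the $k$-subset averaging of the linear bound, now with only $m\ge n(r-1)/2$ (Kostochka--Yancey is not needed) and with $k\in\{36,40\}$ tuned to $\alpha\in[2.81,3.5]$. So both halves of the theorem come from a single mechanism---Gallai's edge bound (resp.\ the trivial degree bound) fed into the averaged linear crossing inequality with $k$ optimised---rather than from the two-track dense/sparse strategy you sketch.
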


\begin{thm}
	\label{thm2}
	Let $G$ be an $r$-critical graph.  If $r\le 24$, then $\Cr(G)\ge \Cr(K_r)$.
	And if $r\le 26$ and $\Cr(G) < \Cr(K_r)$, then $(r,|G|)\in\{(25,48),(26,50),(26,51)\}$.
\end{thm}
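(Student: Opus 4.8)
The plan is to invoke \Cref{thm1} to confine the order of a putative counterexample to two narrow windows, and then to run a finite case analysis over the few remaining orders for each $r\in\{19,\dots,26\}$; we may assume $r\ge 19$, since Ackerman~\cite{ackerman} settled $r\le 18$, and we argue by induction on $r$, carrying along for each smaller $r'$ the best lower bound we can prove on $\Cr(G')$ over all admissible orders of an $r'$-critical $G'$ (this equals $\Cr(K_{r'})$ except at the exceptional pairs). So let $G$ be an $r$-critical graph with $\Cr(G)<\Cr(K_r)$, chosen with $n:=|G|$ minimum. The only $r$-critical graph on $r$ vertices is $K_r$, which is not a counterexample, and there is no $r$-critical graph on $r+1$ vertices, so $n\ge r+2$; by \Cref{thm1} we also have $n<1.212r$ or $1.768r<n<2.812r$, and for $r\le 26$ each window contains only a handful of integers, which I would list explicitly. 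Throughout I would use that $G$ is $(r-1)$-edge-connected with $\delta(G)\ge r-1$, and that $|E(G)|\ge\frac{(r+1)(r-2)n-r(r-3)}{2(r-1)}$ by the Kostochka--Yancey bound, which in the lower window already puts $|E(G)|$ within $O(rn)$ of $\binom n2$.

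For each admissible pair $(r,n)$ I would lower-bound $\Cr(G)$ by the best of several estimates: Euler's bound $\Cr(G)\ge|E(G)|-3n+6$ and the sharper ``biplanar''-type bounds $\Cr(G)\ge c_1|E(G)|-c_2n$; the Crossing Lemma $\Cr(G)\ge|E(G)|^3/(29n^2)$ (valid once $|E(G)|$ is large enough relative to $n$); and the bisection bound $\Cr(G)+c_3\sum_v d_v^2\ge c_4\,b(G)^2$, where the key observation is that $\delta(G)\ge r-1$ together with $n<2r$ forces every balanced cut of $G$ to have width at least $\tfrac{n(2r-n)}4$, which is $\Theta(r^2)$ throughout the dense window. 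I would then compare against a lower bound for $\Cr(K_r)$: the exact value for $r\le 12$, and for $13\le r\le 26$ the best available bound, obtained from known small values together with the counting recursion $\Cr(K_m)\ge\frac m{m-4}\,\Cr(K_{m-1})$.

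The hard part is the dense window $r+2\le n<1.212r$, since there no absolute-constant crossing bound is strong enough: on a graph of order $\approx r$ with $\approx\binom r2$ edges, the Crossing Lemma and the bisection bound each fall short of $\Cr(K_r)$ by a constant factor. To handle it I would use $n<1.212r<2r-2$ and Gallai's structure theorem for critical graphs of small order: $G$ decomposes as a join $G=G_1\ast G_2$ with $G_i$ being $r_i$-critical and $r_1+r_2=r$. By the inductive hypothesis $\Cr(G_i)\ge\Cr(K_{r_i})$ (or the corresponding concrete bound at an exceptional order), and since $G\supseteq K_{|G_1|,|G_2|}$ the join edges force at least $\Cr(K_{|G_1|,|G_2|})$ further crossings; iterating the decomposition reduces matters, in the extremal case, to $G=K_{r-s}\ast H$ with $H$ a non-complete $s$-critical prime graph, necessarily on at least $2s-1$ vertices. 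For such $G$ I would prove $\Cr(G)\ge\Cr(K_{r-s}\ast K_s)=\Cr(K_r)$ by a counting argument comparing an optimal drawing of $G$ with the drawings of $K_r$ it induces on the $(r-s)$-subsets of $V(G)$, plus the crossings forced along the join. Quantifying this ``join-combination'' inequality tightly enough is the crux of the whole proof; the same inequality, applied in the portion $n\le 2r-2$ of the wide window, together with the crossing bounds above in the portion $2r-2<n<2.812r$, is what I would use there.

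Finally, the exceptional pairs arise exactly where these estimates are weakest, namely at the extreme of the join range $n\approx 2(r-1)$, where the join-combination inequality degrades as $r$ grows; a careful accounting of that degradation, propagated through the join decomposition, shows that every admissible $(r,n)$ with $r\le 24$ is still excluded while for $r\in\{25,26\}$ precisely $(r,n)\in\{(25,48),(26,50),(26,51)\}$ survive, which gives the stated conclusion. The main obstacle throughout is that the constants in the Crossing Lemma and the bisection bound are not tight enough to beat $\Cr(K_r)$ by density alone, so everything hinges on extracting a sufficiently strong quantitative bound from the join structure and on the narrow numeric margins in the final comparisons.
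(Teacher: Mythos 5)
Your proposal has a genuine gap at exactly the point you identify as ``the crux'': the join-combination inequality $\Cr(K_{r-s}\ast H)\ge \Cr(K_r)$ for $H$ an $s$-critical non-complete graph is asserted, not proved, and it is not a routine counting step. It amounts to a strengthening of Albertson's Conjecture restricted to joins, and making it quantitative enough to beat $\Cr(K_r)$ in the dense window $r+2\le n<1.23r$ is essentially as hard as the theorem itself (the crossing number of a join is not known to decompose in the way your sketch needs, and $\Cr(K_{|G_1|,|G_2|})$ is itself only conjecturally known). Likewise, the claim that exactly $(25,48),(26,50),(26,51)$ survive is stated without any computation; since the whole theorem lives or dies on narrow numerical margins, a proof must actually exhibit, for each admissible pair $(r,n)$, a lower bound on $\Cr(G)$ exceeding the upper bound $\lfloor r/2\rfloor\lfloor(r-1)/2\rfloor\lfloor(r-2)/2\rfloor\lfloor(r-3)/2\rfloor/4$ on $\Cr(K_r)$. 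Your own toolbox (Euler bound, Crossing Lemma with constant $29$, bisection width) is, as you concede, too weak by a constant factor in the dense window, so nothing in the proposal closes the case $r+5\le n<1.23r$. The induction on $r$ also does no work as written: you never use the inductive bound on $\Cr(G_i)$ in a completed estimate.

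For contrast, the paper's proof is a direct finite verification with sharper primitives and no induction or join decomposition. The range $n\le r+4$ is killed by a result of Bar\'at and T\'oth that such a graph contains a subdivision of $K_r$. The remaining orders up to about $2.8r$ are covered by five overlapping intervals, each excluded by combining the Gallai / Kostochka--Stiebitz edge bounds with the recent linear crossing bound $\Cr(G)\ge 5m-\tfrac{203}{9}(n-2)$ of Bungener and Kaufmann, amplified by averaging over all $k$-vertex subgraphs for $k\in\{12,22,24\}$ (and over Bernoulli($p$) vertex samples for the tail), together with \Cref{thm1b} for $1.228r\le n\le 1.768r$. The exceptional pairs are precisely the integers $n\approx 2r-2$ falling in the gap between the $k=22$ and $k=24$ intervals --- your instinct that the hard orders sit near $n\approx 2(r-1)$, where Gallai's quadratic edge surplus $(n-r)(2r-n)$ vanishes, is correct, but identifying where the difficulty lies is not the same as resolving it. If you want to salvage your outline, replace the join-combination step with the subdivision lemma for $n\le r+4$ and the subgraph-sampling amplification of a linear crossing bound for the rest; the constant in front of $m$ (here $5$, versus the $3$ implicit in Euler's bound) is what makes the numbers work.
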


The notion of $r$-critical graphs was introduced in the 1950s by Dirac, and since 
that time has played a fundamental role in the study of chromatic graph theory.  In the present paper, we 
discuss only what we need of this theory 
for our proofs.  But for a more detailed history, we recommend~\cite{sasha-notes} and~\cite{brooks-book}.

The exact value of $\Cr(K_r)$ is known only when $r\le 12$.  However, well-known drawings of $K_r$ (with the
vertices spaced equally on 2 concentric circles) show
that $\Cr(K_r)\le \floor{r/2}\floor{(r-1)/2}\floor{(r-2)/2}\floor{(r-3)/2}/4$.  This upper bound is tight when $r\le 12$ and is conjectured to be tight for all $r$.  In fact, it is known to be tight~\cite{BLS} up to a 
factor of $0.985$ when $r$ is sufficiently large.

Most progress on Albertson's Conjecture takes the following approach. (1) Prove a lower bound on the number
$m$ of edges in an $n$-vertex $r$-critical graph $G$. (2) Prove a lower bound $f(m,n)$ on $\Cr(G)$ in terms
of $m$ and $n$. (3) Show that $\Cr(G)\ge f(m,n)\ge \floor{r/2}\floor{(r-1)/2}\floor{(r-2)/2}\floor{(r-3)/2}/4
\ge \Cr(K_r)$.  When $G$ is $r$-critical and $|G|\le 3r$, the best bounds on $m$ were determined long ago; 
see \Cref{Gallai-edge-lem} and \Cref{KS-edge-lem}, below.  So, much of the recent progress on Albertson's 
Conjecture is due to improved lower bounds on the above function $f(m,n)$.  And indeed, that is also the case 
in the present paper; see \Cref{best-crossing-thm}.
Using this approach we prove: \Cref{thm1}(b) in \Cref{big-r-sec}, \Cref{thm1}(a) in \Cref{middle-r-sec},
and \Cref{thm2} in \Cref{tiny-r-sec}.

We should also mention briefly a more recent approach.  A \emph{weak immersion} is essentially a subdivision
where paths between branch vertices can share internal vertices or even pass through other branch vertices
(we give a more formal definition near the start of \Cref{small-r-sec}).  Fox, Pach, and Suk~\cite{FPS}
showed that if $G$ is $r$-critical and $|G|\le r(1.64-o(1))$, then $G$ contains a weak immersion $G'$ of 
$K_r$.  They proved that $\Cr(G')\ge \Cr(K_r)-r^3/2$ and found at least $r^3/2$ other crossings in $G-E(G')$.
As we mentioned above, the work in \cite{FPS} does not apply when $r\le 10^{21}$, so in \Cref{small-r-sec}
we adapt it to yield helpful results in this range, too.  Our full result there, \Cref{thm3}, is a bit 
technical, but we get the 
following consequences.  Let $G$ be $r$-critical and a counterexample to Albertson's Conjecture.
If $r\ge 125{,}000$, then $|G|/r\notin[1.10,1.23]$; and if $r\ge 825{,}000$, then $|G|/r\notin[1.05,1.23]$.

\section{Handling \texorpdfstring{$\bm{r}$}{$r$}-critical graphs \texorpdfstring{$\bm{G}$}{G} with 
\texorpdfstring{$\bm{|G|\ge 2.82r}$}{G >= 2.82r}}
\label{big-r-sec}
Much work on crossing numbers draws heavily on versions of the so-called \emph{Crossing Lemma}; see 
\Cref{best-crossing-thm}(ii).
For its history, see~\cite[Chapter~45]{PFTB}.  The ratio $m^3/n^2$ is best possible, but the optimal 
constant is not yet known.
The sharpest currently known version was recently proved~\cite{BK} by Bungener and Kaufmann.  
\begin{thmA}[\cite{BK}]
\label{best-crossing-thm}
	Let $G$ be a graph with $n$ vertices and $m$ edges.  
	(i) If $n \ge 3$, then $\Cr(G)\ge 5m-\frac{203}9(n-2)$.
	(ii) As a result if $m \ge 6.95n$, then $\Cr(G)\ge m^3/(27.48n^2)$.
\end{thmA}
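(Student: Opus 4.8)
The plan is to obtain part~(i) --- a linear lower bound on $\Cr(G)$ --- by iteratively deleting heavily crossed edges from a crossing-minimal drawing, and then to deduce part~(ii) from part~(i) by the standard random-sampling argument that upgrades a linear bound to a Crossing-Lemma-type bound. Throughout I use the routine properties of an optimal drawing $D$ of $G$: no edge crosses itself, no two edges cross more than once, and no two adjacent edges cross. Hence every crossing of $D$ involves four distinct vertices, and every subdrawing of $D$ again has all three properties.

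For part~(i), fix such a $D$, realizing $\Cr(G)$ crossings; for an edge $e$ let $c(e)$ count the crossings of $D$ on $e$, so deleting $e$ destroys exactly $c(e)$ crossings. The input I need is a suite of sparsity bounds: for $k\in\{0,1,2,3,4\}$, every graph admitting a drawing with the three properties above in which each edge is crossed at most $k$ times has at most $\alpha_k(n-2)$ edges, where $\alpha_0=3$ and $\alpha_1,\ldots,\alpha_4$ are the sharpest available such constants --- even the classical $k$-planar bounds give $\alpha_1\le 4$, $\alpha_2\le 5$, $\alpha_3\le \tfrac{11}{2}$, $\alpha_4\le 6$. I then peel $D$ in five phases. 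In Phase~$5$, while some edge has at least $5$ crossings, delete an edge with at least $5$ crossings --- this destroys $\ge 5$ crossings per step and ends once the surviving subdrawing is $4$-planar. In Phase~$j$, for $j=4,3,2,1$ in turn, while some edge is crossed exactly $j$ times, delete it --- this destroys exactly $j$ crossings per step and ends once the surviving subdrawing is $(j-1)$-planar. Write $m_j$ for the number of edges present at the start of Phase~$j$, so $m_5=m$ and $m_j\le\alpha_j(n-2)$ for $j\le 4$, while the final planar drawing has $m_0\le\alpha_0(n-2)$ edges. Since each crossing is destroyed exactly once (when the first of its two edges is removed), the crossings destroyed in different steps are distinct, so
\[
\Cr(G)\ \ge\ 5(m-m_4)+4(m_4-m_3)+3(m_3-m_2)+2(m_2-m_1)+(m_1-m_0)\ =\ 5m-\sum_{j=0}^{4}m_j\,,
\]
and plugging in the sparsity bounds gives $\Cr(G)\ge 5m-\bigl(\sum_{k=0}^{4}\alpha_k\bigr)(n-2)$. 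With the classical values this already yields $\Cr(G)\ge 5m-\tfrac{47}{2}(n-2)$; bringing the constant down to $\tfrac{203}{9}$ is exactly the new work of \cite{BK}, obtained from sharper sparsity bounds for drawings with the three properties above, together with a more careful accounting that does not simply collapse each $k$-planar bound to the linear form $\alpha_k(n-2)$.

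For part~(ii), assume $m\ge 6.95n$, keep each vertex of $G$ independently with probability $p$, and restrict $D$ to the induced subgraph; if $n',m',x'$ denote the numbers of vertices, edges, and crossings of the restricted drawing, then $\mathbb{E}[n']=pn$, $\mathbb{E}[m']=p^2m$, and $\mathbb{E}[x']=p^4\Cr(G)$, since a crossing survives precisely when its four endpoints do. Applying part~(i) to the sample --- after the routine correction for the (here negligible) event $n'<3$, where the linear bound need not hold --- and taking expectations gives $p^4\Cr(G)\ge 5p^2m-\tfrac{203}{9}(pn-2)$, hence $\Cr(G)\ge 5m/p^2-\tfrac{203}{9}\,n/p^3$. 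The right side is maximized at $p=\tfrac{203n}{30m}$, which lies in $(0,1]$ because $m\ge\tfrac{203}{30}n$ (implied by the hypothesis $m\ge 6.95n$); substituting this $p$ yields $\Cr(G)\ge\tfrac{1500}{203^{2}}\cdot\tfrac{m^{3}}{n^{2}}$, and $\tfrac{1500}{203^{2}}>\tfrac{1}{27.48}$, giving the claimed $\Cr(G)\ge m^{3}/(27.48\,n^{2})$.

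The step I expect to be the real obstacle is the one hidden inside part~(i): establishing the sharp sparsity constants $\alpha_1,\ldots,\alpha_4$ for drawings satisfying the three optimality properties, and the refined accounting that extracts the exact value $\tfrac{203}{9}$. This calls for a delicate combinatorial study of the planarized drawing --- bounding the number of faces of each length, controlling how many times each edge can be crossed, and ruling out the densest near-extremal configurations --- which is the kind of intricate case analysis that has driven every previous improvement in this line and that \cite{BK} must push a notch further. By comparison, the amplification in part~(ii) is a routine optimization once part~(i) is in hand.
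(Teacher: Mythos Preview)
The paper does not prove this statement at all: Theorem~A is quoted verbatim from~\cite{BK} and used as a black box throughout, so there is no ``paper's own proof'' to compare against. Your outline is the standard route to Crossing-Lemma-type bounds, and the amplification step deriving~(ii) from~(i) is carried out correctly (the optimal $p=203n/(30m)$ gives $\Cr(G)\ge (1500/203^2)\,m^3/n^2$, and $203^2/1500\approx 27.47<27.48$). You are also right that the entire content of~(i) lies in the constant $203/9$: your peeling argument with the classical $k$-planar edge bounds yields only $47/2$, and you correctly flag that closing the gap to $203/9$ is precisely the contribution of~\cite{BK}, which you do not reproduce. So your write-up is an accurate high-level sketch, but it is a sketch of someone else's theorem rather than of anything the present paper proves.
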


In~\cite{ACF} it was proved that if $G$ is an $r$-critical graph and $|G|\ge 4r$, then
$\Cr(G)\ge \Cr(K_r)$.  In~\cite{BT} and in~\cite{ackerman}, respectively, the same conclusion was 
proved under the weaker hypotheses $|G|\ge 3.57r$ and $|G|\ge 3.03r$.
Here we prove the following strengthening.

\begin{thm}
If $G$ is an $r$-critical graph with $r\ge 15$ and $|G|\ge 2.8118r$, then $\Cr(G)\ge \Cr(K_r)$.
	\label{first-thm}
\end{thm}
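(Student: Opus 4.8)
The plan is to run the three-step template described in the introduction, aiming at the Guy value $U(r):=\tfrac14\floor{r/2}\floor{(r-1)/2}\floor{(r-2)/2}\floor{(r-3)/2}$, since proving $\Cr(G)\ge U(r)$ suffices (as $U(r)\ge\Cr(K_r)$). Write $n:=|G|$ and $m:=|E(G)|$. First I would record the edge bound: by the Kostochka--Yancey inequality (\Cref{KS-edge-lem}), $m\ge\frac{(r+1)(r-2)n-r(r-3)}{2(r-1)}$, so $m/n\ge\frac r2-\frac1{r-1}\ge 7>6.95$ once $r\ge 15$; hence \Cref{best-crossing-thm}(ii) applies and gives $\Cr(G)\ge m^3/(27.48\,n^2)$. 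Substituting the Kostochka--Yancey lower bound for $m$ turns $\Cr(G)\ge U(r)$ into a single polynomial inequality in $n$ and $r$; its left side is cubic and increasing in $n$ while its right side does not depend on $n$, so it holds for every $n\ge c_0r$, with $c_0$ explicit and $c_0\to 27.48/8\approx 3.44$ as $r\to\infty$.

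The difficulty is that $c_0$ exceeds $2.8118$, so on the window $2.8118r\le n<c_0r$ (nonempty only once $r$ is moderately large) the bare Crossing-Lemma estimate does not reach $2.8118r$, and there one must use more about the structure of $G$. I would split this window by edge density. If $2m\ge\lambda rn$ for a suitable $\lambda$ slightly above $1$ — chosen so that $\frac{(\lambda rn/2)^3}{27.48\,n^2}\ge U(r)$ already at $n=2.8118r$ — then \Cref{best-crossing-thm}(ii) alone finishes. Otherwise $G$ is close to edge-minimal among $r$-critical graphs, and I would appeal to the structure of such graphs: Gallai's theorem that the vertices of degree exactly $r-1$ induce a Gallai forest (every block a clique or an odd cycle), together with the Kostochka--Yancey description of edge-minimal $r$-critical graphs as built from copies of $K_r$ by Hajós-type compositions. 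From a Hajós decomposition $G=\mathrm{Haj\acute os}(K_r,H)$ with $H$ on $n-r+1<2r$ vertices, Gallai's (quadratically stronger) edge bound \Cref{Gallai-edge-lem} applies to $H$ and pushes $m$ up; and/or one extracts from $G$ either a subdivision of $K_r$ (giving $\Cr(G)\ge\Cr(K_r)$ outright) or two cliques on $r-1$ vertices meeting in at most one vertex, whose crossing sets are then disjoint, so $\Cr(G)\ge 2\Cr(K_{r-1})\ge\Cr(K_r)$ (the last step because $U(r)/U(r-1)\le(r/(r-1))^4<2$, with room for the slack between $\Cr(K_{r-1})$ and $U(r-1)$).

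The main obstacle is this sparse case: showing that a relatively sparse $r$-critical graph with $n\in[2.8118r,c_0r]$ really must contain the claimed clique pair or $K_r$-subdivision (or be forced dense enough by \Cref{Gallai-edge-lem} propagated through the Hajós decomposition), and then checking that the dense case, the direct Crossing-Lemma range, and the sparse case together cover the whole interval down to exactly $2.8118r$ for every $r\ge 15$. I expect the Gallai-forest bookkeeping — counting edges between the degree-$(r-1)$ vertices and the rest, and using that each block of a Gallai forest is a clique or an odd cycle to force two nearly complete blocks — to be the technical heart, and I would not be surprised if a handful of the smallest $r$ have to be checked separately (or absorbed into the slightly weaker constant $2.812$ of \Cref{thm1}(b)).
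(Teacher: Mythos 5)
Your first case --- invoking the minimum-degree edge bound and Theorem~\ref{best-crossing-thm}(ii) to dispose of all $n\ge c_0r$ with $c_0\approx 27.48/8\approx 3.44$ --- is exactly how the paper opens its proof. But the entire content of the theorem is the window $2.8118r\le n<3.44r$, and there your plan has a genuine gap. The structural route you sketch does not work in this range: Gallai's decomposition (Lemma~\ref{gallai-join-lem}) and the quadratic edge bound (Lemma~\ref{Gallai-edge-lem}) require $n\le 2r-1$, so neither can be ``propagated'' to a graph with $n\approx 3r$; an $r$-critical graph of this order need not contain a $K_r$-subdivision, let alone two copies of $K_{r-1}$ meeting in at most one vertex (critical graphs can be locally very sparse, and nothing forces large cliques); and ``close to edge-minimal'' does not inherit the Haj\'os/$k$-Ore structure of the exact extremal graphs, nor is it clear how to extract a crossing-number lower bound from such a composition even when it exists. (Also, Lemma~\ref{KS-edge-lem} in this paper is the Kostochka--Stiebitz bound $m\ge n(r-1)/2+(r-3)$, not Kostochka--Yancey; either bound still gives only $m/n\approx (r-1)/2$, so your density dichotomy leaves the sparse subcase with no usable lower bound on $\Cr(G)$.)

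The idea you are missing is the paper's second case: instead of the cubic Crossing Lemma, use the \emph{linear} bound $\Cr(H)\ge 5|E(H)|-\tfrac{203}{9}(|H|-2)$ of Theorem~\ref{best-crossing-thm}(i), applied not to $G$ itself but to all $\binom{n}{k}$ induced $k$-vertex subgraphs in a fixed optimal drawing, and then average (each crossing is counted $\binom{n-4}{k-4}$ times and each edge $\binom{n-2}{k-2}$ times). With only the trivial $m\ge n(r-1)/2$ this yields inequality~\eqref{smart-bound}; choosing $k=40$ covers $\alpha\in(2.89,3.84)$ and $k=36$ covers $\alpha\in(2.8118,3.32)$, which together with your first case ($\alpha\ge 3.435$) covers all of $\alpha\ge 2.8118$. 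This averaging of a linear local bound is what lets the constant drop below $27.48/8$; no structural analysis of critical graphs is needed (or, as far as anyone knows, available) in this range.
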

\begin{proof}
Let $n:=|G|$, let $m:=|E(G)|$, and let $\alpha:=n/r$.  To keep the proof self-contained, we first handle the case that $\alpha \ge 3.435$; that is, $\alpha\ge 27.48/8$.
	This case uses an argument from \cite{ACF}.  Recall that every $r$-critical graph 
	has minimum degree at least $r-1$.  Thus, $m\ge n(r-1)/2\ge 7n$.  So by \Cref{best-crossing-thm}(ii) we get:
	\begin{align*}	
		\Cr(G)\ge &~\frac{1}{27.48}\frac{m^3}{n^2} \ge \frac{1}{27.48}\left(\frac{(r-1)n}2\right)^3\frac1{n^2} \\
		 = &~\frac{(r-1)^3n}{8\times 27.48} = \frac{r(r-1)^3\alpha}{8\times 27.48} \ge \frac{r(r-1)^3}{64}\ge \Cr(K_r).
	\end{align*}	
	As promised in the introduction and noted previously in~\cite{ACF}, even without a lower bound on 
	$\alpha$, when $r\ge 15$ we get $\Cr(G)\ge (r-1)^4/2^8$, and hence $r\le 1+4\Cr(G)^{1/4} = O(\Cr(G)^{1/4})$.

	\smallskip
	Now we handle the case that $3.5 \ge \alpha \ge 2.8118$.
	Our proof follows the approach of \cite{ackerman}, but uses the stronger tool in 
	\Cref{best-crossing-thm}(i).  The general idea is to choose a $k$-vertex subgraph of $G$ 
	(uniformly at random) to which we apply \Cref{best-crossing-thm}(i).  More formally, 
	let $t:={n \choose k}$, and let $G_1,\ldots, G_t$
	denote the $t$ (inherited drawings of the) $k$-vertex subgraphs of $G$.  Note that each crossing of $G$ appears in 
    exactly ${n-4\choose k-4}$ of the $G_i$; similarly, each edge of $G$ appears in ${n-2 \choose k-2}$ of the $G_i$.
	For each $G_i$, we denote by $m_i$ its number of edges. So by \Cref{best-crossing-thm}(i) we have $\Cr(G_i)\ge 5m_i - \frac{203(k-2)}9$.  By summing over all $G_i$, we get:

\begin{align}
    \Cr(G) & \ge \frac{1}{{n-4\choose k-4}}\sum_{i=1}^t\Cr(G_i) \ge
            \frac{1}{{n-4\choose k-4}}\sum_{i=1}^t\left(5m_i - \frac{203(k-2)}9\right) \nonumber \\
           & =5m\frac{{n-2\choose k-2}}{{n-4\choose k-4}} - \frac{203(k-2){n\choose k}}{9{n-4\choose k-4}} \nonumber \\
	   & = 5m\frac{(n-2)(n-3)}{(k-2)(k-3)} - \frac{203n(n-1)(n-2)(n-3)}{9k(k-1)(k-3)}\label{smart-bound}\\
           & \ge \frac{5(r-1)n}2\frac{(n-2)(n-3)}{(k-2)(k-3)} - \frac{203n(n-1)(n-2)(n-3)}{9k(k-1)(k-3)}\nonumber\\
           & = \frac{n(n-2)(n-3)}{2(k-3)}\left(\frac{5(r-1)}{k-2}-\frac{406(n-1)}{9k(k-1)}\right)\nonumber\\
           & = \frac{\alpha^3r(r-\frac2\alpha)(r-\frac3\alpha)}{2(k-3)}\left(\frac{5(r-1)}{k-2}-\frac{406(n-1)}{9k(k-1)}\right)\nonumber\\
           & \ge \frac{(\alpha^3r((r-2)((r-3)+3)}{2(k-3)}\left(\frac{5(r-1)}{k-2}-\frac{406(r-1)(\alpha+\frac{\alpha-1}{r-1})}{9k(k-1)}\right)\nonumber\\
           & = \frac{(\alpha^3r(r-1)(r-2)(r-3)}{2(k-3)}\left(\frac{5}{k-2}-\frac{406\alpha}{9k(k-1)}\right)+h(\alpha,r,k),\nonumber
\end{align}
where we let
\begin{align*}
    h(\alpha,r,k)&:=\frac{\alpha^3r(r-1)(r-2)}{2(k-3)}\left(\frac{15}{k-2}-\frac{406}{9k(k-1)}\left(3\alpha+3\frac{\alpha-1}{r-1}+\frac{r-3}{r-1}(\alpha-1)\right)\right)\\
    &~\ge \frac{\alpha^3r(r-1)(r-2)}{2(k-3)}\left(\frac{15}{k-2}-\frac{406}{9k(k-1)}\left(3\alpha+\frac{\alpha-1}{6}+(\alpha-1)\right)\right).\\
\end{align*}
	The final inequality for $\Cr(G)$ holds because $\alpha\ge 2.8$, so $(r-2/\alpha)(r-3/\alpha)>(r-.8)(r-1.2) > r(r-2)$.

    Recall that $\Cr(K_r)\le \frac1{64}r(r-1)(r-2)(r-3)$.  So, to prove the proposition for all $r$-critical 
    graphs with $n=\alpha r$, for some specific value of $\alpha$, it suffices to find $k$ such that both 
    (a) $\frac{\alpha^3}{2(k-3)}\left(\frac{5}{k-2}-\frac{406\alpha}{9k(k-1)}\right) - \frac1{64}\ge 0$ and 
    (b) $h(\alpha,r,k)\ge 0$.  We will use $2$ different choices of $k$, depending on the value of $\alpha$.

    \begin{figure}[!h]
    \includegraphics[scale=.5] {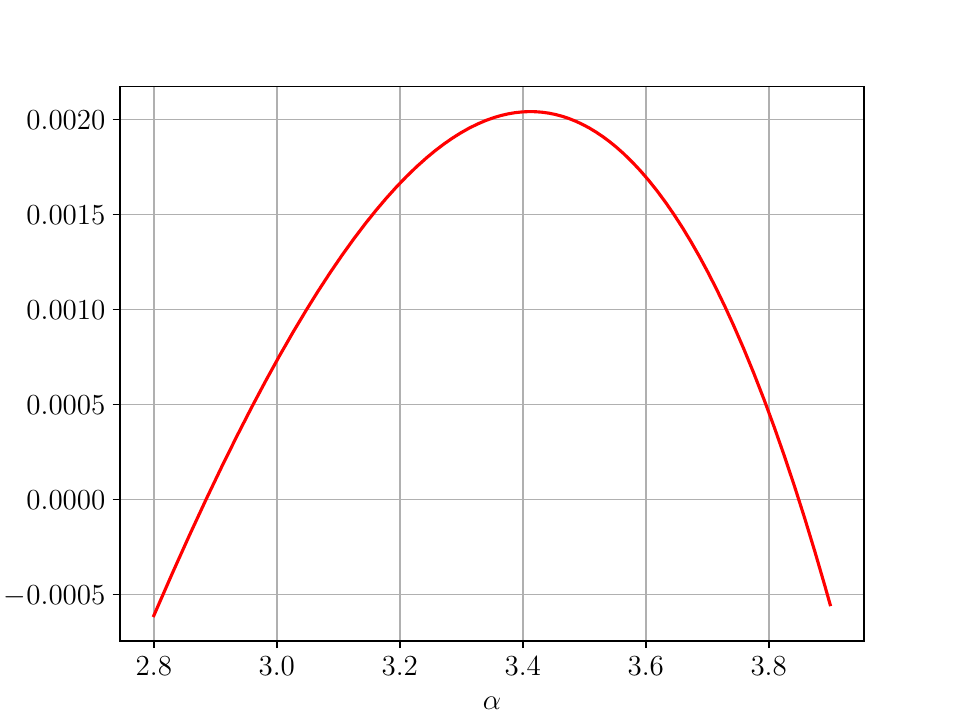}%
    \includegraphics[scale=.5] {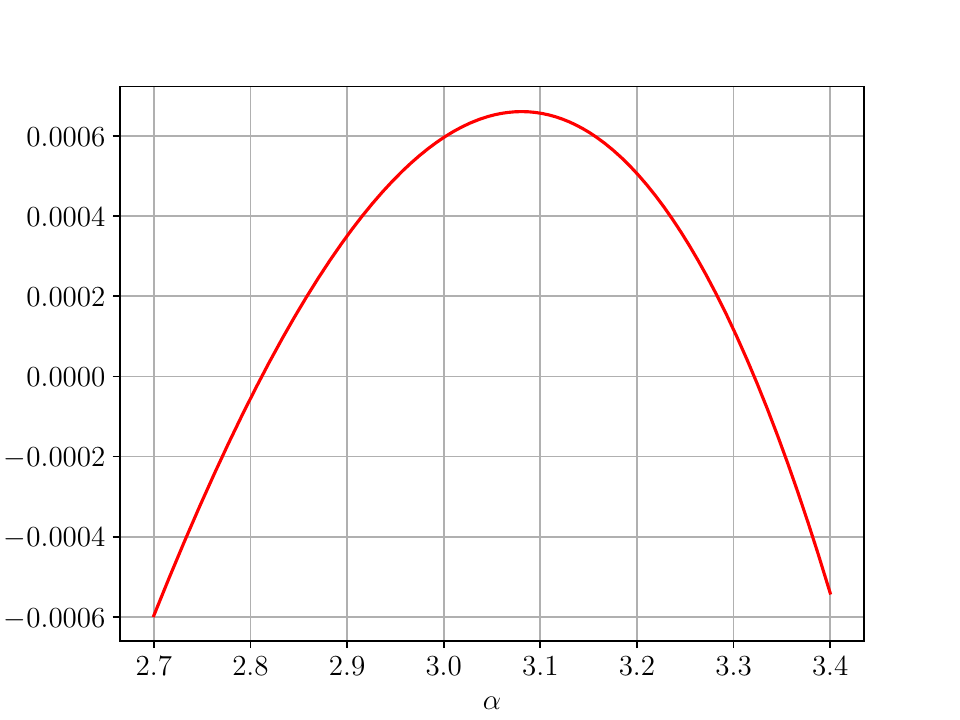}
			\captionsetup{width=.6\textwidth}
			\caption{Left: $\frac{\alpha^3}{2(40-3)}\left(\frac{5}{40-2}- 
			\frac{406\alpha}{9(40)(40-1)}\right) - \frac1{64}\ge 0$ when $\alpha\in(2.89,3.84)$.
			Right: $\frac{\alpha^3}{2(36-3)}\left(\frac{5}{36-2}- 
			\frac{406\alpha}{9(36)(36-1)}\right) - \frac1{64}\ge 0$ when $\alpha\in(2.8118,3.32)$.%
			\label{big-r-fig}}
    \end{figure}

    Let $k:=40$, so the left side of inequality (a) is 
	$\frac{\alpha^3}{2(40-3)}\left(\frac{5}{40-2}- \frac{406\alpha}{9(40)(40-1)}\right) - \frac1{64}$, 
	which is a downward opening quartic.  
    It is straightforward to check that it is positive on the interval $(2.89,3.84)$; 
	for example, see the left of \Cref{big-r-fig}, on the next page.
    Now we consider $h(\alpha,r,40)$.  We are only concerned with the function being nonnegative, 
    so we let $\hat{h}(\alpha,k):=h(\alpha,r,k)/(\alpha^3r(r-1)(r-2)/(2(k-3)))$ and show that
	$\hat{h}(\alpha,40)\ge 0$.  But $\hat{h}(\alpha,40)\ge15/38-406/(9\times40\times39))\times(19\alpha-17)/14$.
	So it is nonnegative when $\alpha\le 3.5$.

    Now let $k:=36$, so the left side of (a) is $\frac{\alpha^3}{2(36-3)}\left(\frac{5}{36-2}-
    \frac{406\alpha}{9(36)(36-1)}\right) - \frac1{64}$, which is again a downward opening quartic.  
    It is straightforward to check that it is positive on the interval $(2.8118,3.32)$; 
    for example, see the right of \Cref{big-r-fig}.
    Now we consider $h(\alpha,r,36)$.  Since we are only concerned with the function being nonnegative, 
    we let $\hat{h}(\alpha,k):=h(\alpha,r,k)/(\alpha^3r(r-1)(r-2)/(2(k-3)))$ and show that
	$\hat{h}(\alpha,36)\ge 0$.  But $\hat{h}(\alpha,36)\ge 15/34-406/(9\times36\times35))\times(59\alpha-17)/14$.
	So it is nonnegative when $\alpha\le 3.21$.
\end{proof}

\section{Handling \texorpdfstring{$\bm{r}$}{r}-critical graphs \texorpdfstring{$\bm{G}$}{G} 
with \texorpdfstring{$\bm{1.228r\le |G|\le 1.768r}$}{1.23r <= |G| <= 1.768r}}
\label{middle-r-sec}

Recall that when $G$ is $r$-critical, $\delta(G)\ge r-1$, so we trivially have $m\ge (r-1)|G|/2$.
But when also $r<|G|<2r$, we can greatly improve our lower bound on $m$.
\begin{lemA}[\cite{gallai2,KS}]
    \label{Gallai-edge-lem}
    If $G$ is an $n$-vertex $r$-critical graph, with $r\ge 4$ and $r+2\le n\le 2r-1$, then $|E(G)| 
    \ge ((r-1)n+(n-r)(2r-n)-2)/2$.
\end{lemA}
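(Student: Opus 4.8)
This is a classical theorem of Gallai~\cite{gallai2}, reproved with refinements by Kostochka and Stiebitz~\cite{KS}; the economical thing to do is to cite it, but here is the line of attack I would take.

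I would first reformulate. Put $d:=n-r$, so the hypothesis $r+2\le n\le 2r-1$ becomes $2\le d\le r-1$. Expanding both sides and substituting $n=r+d$ shows that the asserted inequality $|E(G)|\ge((r-1)n+(n-r)(2r-n)-2)/2$ is equivalent to $\binom n2-|E(G)|\le d^2+1$; that is, writing $\overline G$ for the complement of $G$, it suffices to prove $|E(\overline G)|\le d^2+1$. Then I would record the constraints that make $\overline G$ sparse. From $\delta(G)\ge r-1$ we get $\Delta(\overline G)\le d$, with $\deg_{\overline G}(v)=d$ precisely when $v$ is a low vertex of $G$ (i.e.\ $\deg_G(v)=r-1$) and $\deg_{\overline G}(v)\le d-1$ for every other vertex. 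From $r$-criticality we get $\alpha(G)\le d+1$, since an independent set of size $d+2$ would give a proper $(r-1)$-colouring; more generally $\overline G$ has no family of pairwise disjoint cliques $Q_1,\dots,Q_t$ with $\sum_i(|Q_i|-1)\ge d+1$, so in particular the matching number of $\overline G$ is at most $d$. Dominating vertices of $G$ are exactly the isolated vertices of $\overline G$ and affect neither $d$ nor $|E(\overline G)|$, so we may assume there are none (the single boundary case $n=2r-1$, where deleting a dominating vertex would leave the stated range, I would dispatch by a direct variant of the main argument).

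The heart of the matter is Gallai's structural description: the subgraph $G[L]$ induced by the low vertices $L$ has every block a complete graph or an odd cycle. I would use this first to bound component sizes — two low vertices in different components of $G[L]$ are non-adjacent in $G$, and a low vertex has only $d$ non-neighbours, so every component of $G[L]$ has at least $\ell-d$ vertices (where $\ell:=|L|$); hence either $G[L]$ is connected or $\ell\le 2d$. Writing $H:=V(G)\setminus L$, double-counting the $\overline G$-edges incident to $L$ and to $H$ yields the identity $|E(\overline G)|=\ell d-\binom\ell2+|E(G[L])|+|E(\overline G[H])|$, and I would feed into this: the bound $|E(G[L])|\le\binom\ell2$, sharpened through the Gallai-tree structure when $G[L]$ is not a single clique; the bound $\deg_{\overline G}(v)\le d-1$ for $v\in H$ combined with the ``every edge meets a maximum matching'' argument inside $\overline G[H]$; and, crucially, the fact that each \emph{high} vertex also has $\delta(G)\ge r-1$, so it can absorb at most $d-1$ non-edges — the interplay of these is what should pin $|E(\overline G)|$ down to $d^2+1$. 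According to whether $G[L]$ is connected or $\ell\le 2d$, and whether a leaf block of $G[L]$ is a clique or an odd cycle, this breaks into a handful of cases, each closed by induction on $n$ after removing a carefully chosen vertex and checking we remain in the range $2\le d\le r-1$.

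I expect the counting to be the main obstacle: the crude inequalities above overshoot $d^2+1$ (roughly by a factor of two in general), and the slack has to be recovered by exploiting the Gallai-tree structure of $G[L]$ in tandem with the minimum-degree constraint on the high vertices — while keeping the tight examples $C_5$ and $C_5+K_{r-3}$ in view. Since a fully self-contained proof would moreover need a proof of Gallai's block-structure theorem for $G[L]$ (itself an induction on $|G|$ via criticality), and both ingredients are standard, I would in the end simply cite~\cite{gallai2,KS}.
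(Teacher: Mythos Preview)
The paper does not prove this lemma; it simply cites Gallai~\cite{gallai2} for the case $n\le 2r-2$ and Kostochka--Stiebitz~\cite{KS} for $n=2r-1$, exactly as you anticipate and ultimately recommend. Your reformulation $|E(\overline G)|\le d^2+1$ with $d=n-r$ is correct, and the ingredients you list (Gallai's block structure of the low-vertex subgraph, the independence/matching bounds in $\overline G$, the degree constraints on high vertices) are indeed the standard ones; but as you yourself concede, the counting you outline overshoots and the case analysis is not carried through, so what you have is an accurate roadmap rather than a proof --- which matches the paper's treatment, since no proof is given there either.
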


\Cref{Gallai-edge-lem} was proved by Gallai~\cite{gallai2} when $n\le 2r-2$ and extended to the case $n=2r-1$ 
by Kostochka and Stiebitz~\cite{KS}.
It is crucial in the proof of our next theorem, which is the main result of this section.
\begin{thm}
	\label{thm1b}
	If $G$ is $r$-critical and $1.228r\le |G|\le 1.768r$, then $\Cr(G)\ge \Cr(K_r)$.
\end{thm}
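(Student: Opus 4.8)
The plan is to mimic the proof of \Cref{first-thm} almost line for line, the one change being that the trivial bound $m\ge(r-1)n/2$ is replaced by the much stronger bound of \Cref{Gallai-edge-lem}. Write $n:=|G|$, $m:=|E(G)|$, and $\alpha:=n/r$. The hypothesis $1.228r\le n\le 1.768r$ puts $n$ in the range $r+2\le n\le 2r-1$, so \Cref{Gallai-edge-lem} applies and gives $m\ge\frac12\bigl((r-1)n+(n-r)(2r-n)-2\bigr)$. Substituting $n=\alpha r$ and writing $g(\alpha):=\alpha+(\alpha-1)(2-\alpha)=4\alpha-\alpha^2-2$, this becomes $m\ge\frac{g(\alpha)}{2}r^2-\frac{\alpha r}{2}-1$; note that $g(\alpha)>1$ for $1<\alpha<3$, in particular throughout our interval.

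Next I would run the random-subgraph argument of \Cref{first-thm} verbatim: pick a $k$-vertex subgraph of $G$ uniformly at random, apply \Cref{best-crossing-thm}(i) to it, and average over the $\binom nk$ choices (each crossing surviving in $\binom{n-4}{k-4}$ of them and each edge in $\binom{n-2}{k-2}$). This produces the same lower bound displayed midway through that proof, namely $\Cr(G)\ge 5m\,\frac{(n-2)(n-3)}{(k-2)(k-3)}-\frac{203\,n(n-1)(n-2)(n-3)}{9k(k-1)(k-3)}$. Plugging in $m\ge\frac{g(\alpha)}{2}r^2-\frac{\alpha r}{2}-1$, the coefficient of $r^4$ in the resulting bound is $\psi_k(\alpha):=\frac{5g(\alpha)\alpha^2}{2(k-2)(k-3)}-\frac{203\alpha^4}{9k(k-1)(k-3)}$, and everything else is $O(r^3)$ — coming from the $-\frac{\alpha r}{2}-1$ in the edge bound and from $(n-2)(n-3)$ versus $n^2=\alpha^2r^2$, and so on. I would collect these lower-order terms into an auxiliary function $\hat h(\alpha,r,k)$ and argue it is nonnegative, exactly as $h(\alpha,r,k)$ is used in the proof of \Cref{first-thm}.

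Since $\Cr(K_r)\le\floor{r/2}\floor{(r-1)/2}\floor{(r-2)/2}\floor{(r-3)/2}/4\le\frac1{64}r(r-1)(r-2)(r-3)$, which is comfortably below $\frac{r^4}{64}$, it then suffices, for each $\alpha\in[1.228,1.768]$, to exhibit an integer $k$ with (a) $\psi_k(\alpha)\ge\frac1{64}$ and (b) $\hat h(\alpha,r,k)\ge0$. For fixed $k$, $\psi_k(\alpha)-\frac1{64}$ is a downward-opening quartic in $\alpha$, hence nonnegative on an interval; the near-optimal choice of $k$ for a given $\alpha$ scales like $\frac{203\alpha^2}{15\,g(\alpha)}$, and a handful of values — roughly $k=12$ near $\alpha=1.228$, $k\approx 17$ in the middle of the range, and $k=19$ near $\alpha=1.768$ — cover the whole window, just as $k=40$ and $k=36$ sufficed for \Cref{first-thm}. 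For (b) I would verify nonnegativity of $\hat h$ on each relevant sub-interval using $g(\alpha)>1$ and $\alpha\le1.768$, exploiting also that $\floor{r/2}\cdots/4$ is appreciably smaller than $r^4/64$ for moderate $r$ (which absorbs the $O(r^3)$ corrections there); any finitely many small $r$ left uncovered this way are already handled by \Cref{thm2}.

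The \textbf{main obstacle} is that the window $[1.228,1.768]$ is essentially the exact range in which this argument can work at all: at either endpoint only a single value of $k$ makes $\psi_k(\alpha)-\frac1{64}$ nonnegative, and even then it is barely nonnegative, so the quartic inequalities and — more delicately — the $O(r^3)$ bookkeeping in $\hat h$ must be done with care. This is what pins down the constants $1.228$ and $1.768$, and it is also why the same method does not reach into $[1.768r,\,2.812r]$, a range that (for $r>24$) stays outside this theorem.
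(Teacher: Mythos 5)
Your proposal follows the paper's proof essentially exactly: Gallai's edge bound is substituted into the $k$-subgraph sampling bound \eqref{smart-bound}, your leading quartic $\psi_k(\alpha)$ is the paper's $f(\alpha,k)+\frac{1}{64}$, and you arrive at the same endpoint choices $k=12$ and $k=19$ (for the middle window the paper takes $k=15$ where you suggest $k\approx 17$; either works), with the lower-order terms handled by the paper via an explicit expansion of $p_k(r,\alpha)$ into coefficient polynomials $p_k^{(i)}(\alpha)$ shown nonnegative for all $r\ge 13$, rather than your $\hat h$. One caution: do not fall back on \Cref{thm2} for leftover small $r$, since its proof invokes \Cref{thm1b} and this would be circular; the paper avoids any such gap by making the bounds uniform in $r\ge 13$, and $r\le 18$ is covered by prior work in any case.
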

\begin{proof}
The full proof is a bit technical and tedious, but the main idea is simple.  We are looking for values of $\alpha$ such that there exists a value of $k$
	such that for all values of $r$ Inequality \eqref{ineq3.1} below holds.  

	\begin{align}
	\frac{5((r\!-\!1)\alpha r\!+\!(\alpha r\!-\!r)(2r\!-\!\alpha r)-2)}2\frac{(r\alpha\!-\!2)(r\alpha\!-\!3)}{(k\!-\!2)(k\!-\!3)} \!-\!
\frac{203r\alpha(r\alpha\!-\!1)(r\alpha\!-\!2)(r\alpha\!-\!3)}{9k(k-1)(k-3)}
		& \ge \frac{r(r\!-\!1)(r\!-\!2)(r\!-\!3)}{64} \label{ineq3.1}
\end{align}

	The right side of \eqref{ineq3.1} is an upper
	bound on $\Cr(K_r)$ and the left side is a lower bound on $\Cr(G)$ when $G$ is an $r$-critical graph
	with $|G|=\alpha r$; the latter arises from substituting the edge bound from \Cref{Gallai-edge-lem} 
	into Inequality~\eqref{smart-bound}.
	This $3$-dimensional parameter space (in $\alpha,r,k$) is harder to grasp
	intuitively, so we begin by projecting it to 2 dimensions.  When $r$ is large, the truth (or 
	falseness) of Inequality~\eqref{ineq3.1} essentially reduces to the truth (or falseness) of its
	restriction to the terms with a factor of $r^4$.  For this restriction, we can divide out the
	common factor of $r^4$, removing the dependence on $r$.  This yields Inequality \eqref{ineq3.2} below.

	\begin{align}
		\frac{5}2\frac{(\alpha+(\alpha-1)(2-\alpha))\alpha^2}{(k-2)(k-3)}-\frac{203}9\frac{\alpha^4}{k(k-1)(k-3)}-  \frac{1}{64} &\ge 0\label{ineq3.2}
\end{align}

	To better understand this constraint, we let $f(\alpha,k)$ denote the left side of Inequality 
	\eqref{ineq3.2}.  We consider \Cref{contour-fig}, which is a contour plot of $f(\alpha,k)$.  Now 
	Inequality \eqref{ineq3.2} holds on precisely the region 
	enclosed by the highest (innermost) level curve. 

\begin{figure}[!h]
\begin{center}
\includegraphics[scale=.6]{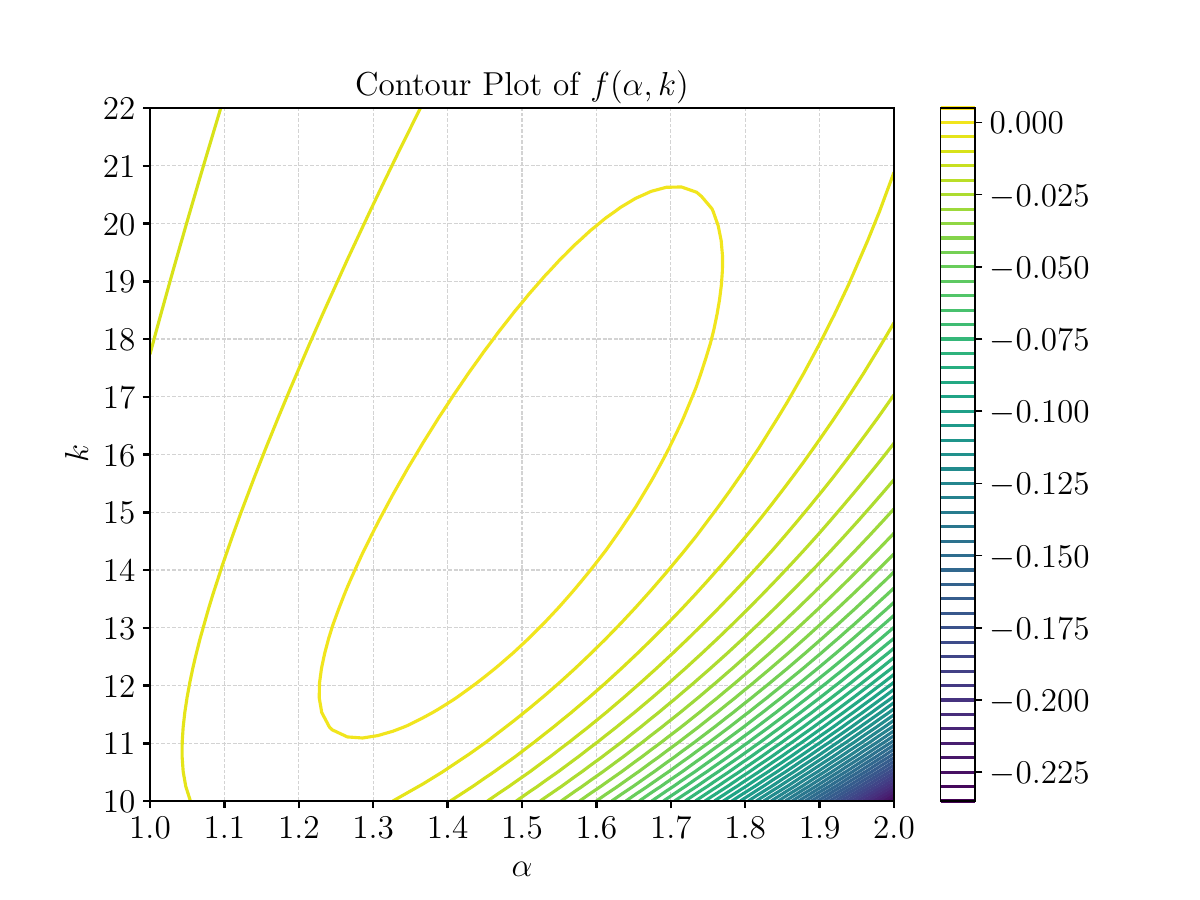}%
	\captionsetup{width=.54\textwidth}
	\caption{A contour plot of the left side of Inequality \eqref{ineq3.2}. The feasible region (where $f(\alpha,k)\ge0$) is the region inside of the innermost oval.\label{contour-fig}}
\end{center}
\end{figure}

	We see immediately that the smallest feasible value of $\alpha$ is about $\alpha=1.23$, which is
	valid only when $k=12$.  Similarly, the largest feasible value of $\alpha$ is about $\alpha=1.77$,
	valid only when $k=19$.  Furthermore, $k=12$ works for all $\alpha\in[1.23,1.43]$ and $k=19$
	works for all $\alpha\in[1.53,1.77]$.  To finish the proof, we also need a value of $k$ that
	works for all $\alpha\in[1.43,1.53]$.  Here we have flexibility.  From the contour plot, we see that
	we can choose any $k\in\{14,15,16,17\}$.  Somewhat arbitrarily, we choose $k=15$.

	We emphasize that the proof sketch above is not rigorous, because we have ignored lower order terms.
	However, it can be made rigorous, with essentially no change in the claimed bounds.  And that is
	what we do in the rest of the proof.  Below we provide the details.
	\bigskip

As noted above, for different parts of our range of $\alpha$ values, we will use different 
values of $k$ (19, 15, 12).  And for each value of $k$, we will 
get a polynomial $p_k(r,\alpha)$ in $r$ and $\alpha$, with each of $r$ and $\alpha$ raised to the power 
at most $4$.  We will need to determine the ranges of $\alpha$ such that $p_k(r,\alpha)\ge 0$ for all $r$.
There are various ways to perform this step, and we adopt the following.  We write $p_k(r,\alpha)$
as the sum of $4$ polynomials, and determine the appropriate range of $\alpha$
such that each of these polynomials is nonnegative (for all $r\ge 13$) and take the intersection of these 
ranges.  Our $i$th polynomial will consist primarily of the terms with $r^i$ as a (maximal) factor;
however, in a few terms we will vary from this approach.
We let
\begin{align*}
	p_{19}(r,\alpha) :=100\biggl[ &\frac{5((r-1)\alpha r+(\alpha r-r)(2r-\alpha r)-2)}2\frac{(r\alpha-2)(r\alpha-3)}{(19-2)(19-3)} -
\frac{203r\alpha(r\alpha-1)(r\alpha-2)(r\alpha-3)}{9(19)(19-1)(19-3)}\\
	&\left.-\frac{r(r-1)(r-2)(r-3)}{64}\right].
\end{align*}

Our goal is to show that $p_{19}(r,\alpha)\ge 0$ for all $r\ge 13$, over some range of values of $\alpha$.
The expression inside the brackets in the definition of $p_{19}(r,\alpha)$ is the left side of Inequality
\eqref{ineq3.1} minus the right, letting $k:=19$.
The multiplier $100$ is chosen, somewhat arbitrarily, to make more pleasant the coefficients in the 
expansion below.
Expanding and regrouping gives:

\begin{alignat*}{4}
	p_{19}(r,\alpha) =~ & r^4\left(-\frac{139325}{104652}\alpha^4 + \frac{125}{34}\alpha^3-\frac{125}{68}\alpha^2-\frac{25}{16}\right)
	&& +~r^3\left(\frac{214525}{34884}\alpha^3 - \frac{625}{34}\alpha^2+\frac{625}{68}\alpha+\frac{15}2\right)\\
	+~& r^2\left(-\frac{142675}{26163}\alpha^2 - \frac{375}{17}\alpha-\frac{7675}{272}+\frac{15}8r\right)
	&& +~r\left(-\frac{26525}{8721}\alpha+\frac{75}8\right)
\end{alignat*}
\smallskip

Let $p^{(4)}_{19}(\alpha):= -\frac{139325}{104652}\alpha^4 + \frac{125}{34}\alpha^3-\frac{125}{68}\alpha^2-\frac{25}{16}$;
let $p^{(3)}_{19}(\alpha):= \frac{214525}{34884}\alpha^3 - \frac{625}{34}\alpha^2+\frac{625}{68}\alpha+\frac{15}2$;
let $p^{(2)}_{19}(\alpha):= -\frac{142675}{26163}\alpha^2 - \frac{375}{17}\alpha-\frac{7675}{272}+\frac{15}8(13)$;
and let $p^{(1)}_{19}(\alpha):= -\frac{26525}{8721}\alpha+\frac{75}8$.
Note, when $r\ge 13$, that $p_{19}(r,\alpha) \ge 
r^4 \times p^{(4)}_{19}(\alpha)
+r^3 \times p^{(3)}_{19}(\alpha)
+r^2 \times p^{(2)}_{19}(\alpha)
+r \times p^{(1)}_{19}(\alpha)$.  (The reason we don't generally have equality is that when defining 
$p_{19}^{(2)}(\alpha)$ we replace $15r/8$ with the lower bound $15(13)/8$.)  So for any value of $\alpha$ 
such that $p_{19}^{(i)}(\alpha)\ge 0$ for all $i\in[4]$, we get that $p_{19}(r,\alpha)\ge 0$ whenever 
$r\ge 13$.  For the $4$ polynomials $p^{(i)}_{19}(\alpha)$ to be nonnegative, in order of decreasing $i$ we 
get (supersets of) the ranges: $\alpha\in (1.525, 1.7689)$ and $\alpha\in (1,\infty)$ and $\alpha\in (1,3.8)$ 
and $\alpha\in (1,3)$.  Thus, the intersection is the range $(1.525,1.7689)$; see \Cref{19-fig}.

\begin{figure}[!h]
\begin{center}
\includegraphics[scale=.635]{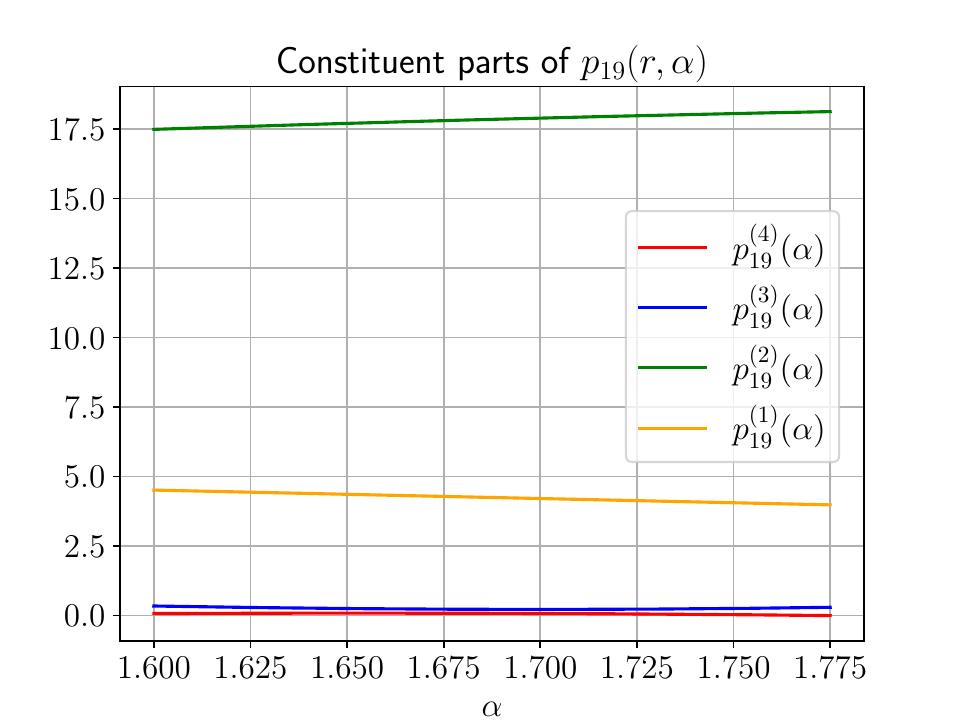}%
	\caption{The constituent parts of $p_{19}(r,\alpha)$ on $(1.600,1.775)$.\label{19-fig}}
\end{center}
\end{figure}

Next, we repeat the process above, letting $k:=15$, to handle a range when $\alpha$ is smaller.  We get
\begin{align*}
	p_{15}(r,\alpha) :=100\biggl[ &\frac{5((r-1)\alpha r+(\alpha r-r)(2r-\alpha r)-2)}2\frac{(r\alpha-2)(r\alpha-3)}{(15-2)(15-3)} -
\frac{203r\alpha(r\alpha-1)(r\alpha-2)(r\alpha-3)}{9(15)(15-1)(15-3)}\\
	&\left.-\frac{r(r-1)(r-2)(r-3)}{64}\right].
\end{align*}
After expanding and regrouping, we get:
\begin{alignat*}{4}
	p_{15}(r,\alpha) =~ & r^4\left(-\frac{2630}{1053}\alpha^4 + \frac{250}{39}\alpha^3-\frac{125}{39}\alpha^2-\frac{25}{16}\right)
	&& +~r^3\left(\frac{4135}{351}\alpha^3 - \frac{1250}{39}\alpha^2+\frac{625}{39}\alpha+\frac{70}8\right)\\
	+~& r^2\left(-\frac{12055}{1053}\alpha^2 + \frac{500}{13}\alpha-\frac{7575}{208}+\frac{5}8r\right)
	&& +~r\left(-\frac{1490}{351}\alpha+\frac{75}8\right).
\end{alignat*}
Let $p^{(4)}_{15}(\alpha):= -\frac{2630}{1053}\alpha^4 + \frac{250}{39}\alpha^3-\frac{125}{39}\alpha^2-\frac{25}{16}$;
let $p^{(3)}_{15}(\alpha):= \frac{4135}{351}\alpha^3 - \frac{1250}{39}\alpha^2+\frac{625}{39}\alpha+\frac{70}8$;
let $p^{(2)}_{15}(\alpha):= -\frac{12055}{1053}\alpha^2 - \frac{500}{13}\alpha-\frac{7575}{208}+\frac{5}8(13)$;
and let $p^{(1)}_{15}(\alpha):= -\frac{1490}{351}\alpha+\frac{75}8$.
Note, when $r\ge 13$, that $p_{15}(r,\alpha) \ge 
r^4 \times p^{(4)}_{15}(\alpha)
+r^3 \times p^{(3)}_{15}(\alpha)
+r^2 \times p^{(2)}_{15}(\alpha)
+r \times p^{(1)}_{15}(\alpha)$.  So for any value of $\alpha$ such that $p_{15}^{(i)}(\alpha)\ge 0$ for
all $i\in[4]$, we get that $p_{15}(r,\alpha)\ge 0$ whenever $r\ge 13$.
For the $4$ polynomials $p^{(i)}_{15}(\alpha)$, in order of decreasing $i$ we get (supersets of) the ranges: 
$\alpha\in (1.314, 1.648)$ and $\alpha\in (1,\infty)$ and $\alpha\in (1.1,2.2)$ and $\alpha\in (1,2)$.
Thus, the intersection is the range $(1.314,1.648)$; see \Cref{15-fig}.

\begin{figure}[!h]
\begin{center}
\includegraphics[scale=.635]{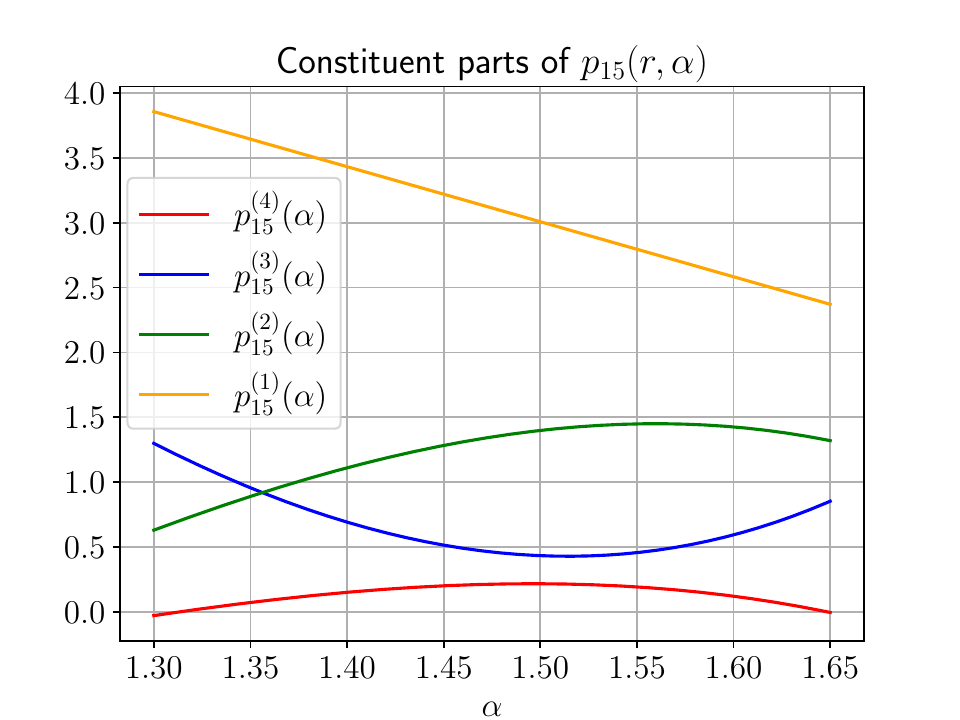}
		\caption{The constituent parts of 
	$p_{15}(r,\alpha)$ on $(1.30,1.65)$.\label{15-fig}} 
\end{center}
	\end{figure}

Finally, we repeat the process again, letting $k:=12$, to handle the range when $\alpha$ is smallest.  We get:
\begin{align*}
	p_{12}(r,\alpha) :=100\biggl[ &\frac{5((r-1)\alpha r+(\alpha r-r)(2r-\alpha r)-2)}2\frac{(r\alpha-2)(r\alpha-3)}{(12-2)(12-3)} -
\frac{203r\alpha(r\alpha-1)(r\alpha-2)(r\alpha-3)}{9(12)(12-1)(12-3)}\\
	&\left.-\frac{r(r-1)(r-2)(r-3)}{64}\right].
\end{align*}
After expanding and regrouping, we get:
\begin{alignat*}{4}
	p_{12}(r,\alpha) =~ & r^4\left(-\frac{12500}{2673}\alpha^4 + \frac{1000}{9}\alpha^3-\frac{500}{9}\alpha^2-\frac{25}{16}\right)
	&& +~r^3\left(\frac{20050}{891}\alpha^3 - \frac{500}{9}\alpha^2+\frac{250}{9}\alpha+\frac{69}8\right)\\
	+~& r^2\left(-\frac{5750}{243}\alpha^2 - \frac{200}{3}\alpha-\frac{2425}{48}+\frac{3}4r\right)
	&& +~r\left(-\frac{4700}{891}\alpha+\frac{75}8\right)
\end{alignat*}

\begin{figure}[!h]
\begin{center}
\includegraphics[scale=.625]{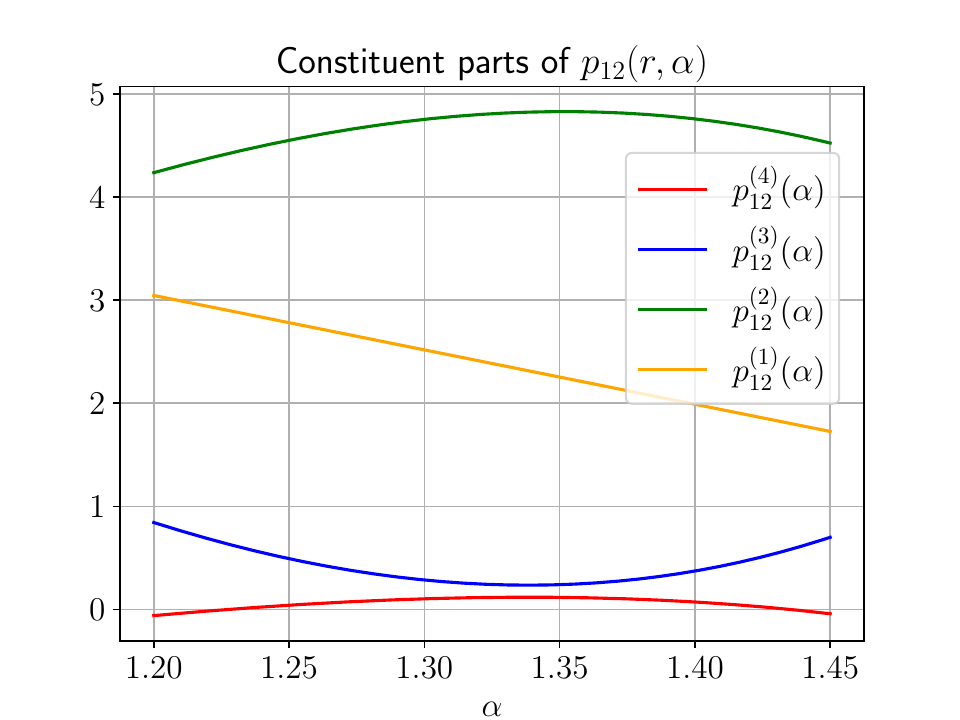}
		\caption{The constituent parts of $p_{12}(\alpha,r)$ on $(1.20,1.45)$.\label{12-fig}} 
\end{center}
	\end{figure}

Let $p^{(4)}_{12}(\alpha):= -\frac{12500}{2673}\alpha^4 + \frac{1000}{9}\alpha^3-\frac{500}{9}\alpha^2-\frac{25}{16}$;
let $p^{(3)}_{12}(\alpha):= \frac{20050}{891}\alpha^3 - \frac{500}{9}\alpha^2+\frac{250}{9}\alpha+\frac{69}8$;
let $p^{(2)}_{12}(\alpha):= -\frac{5750}{243}\alpha^2 + \frac{200}{3}\alpha-\frac{2425}{48}+\frac{3}4(13)$;
and let $p^{(1)}_{12}(\alpha):= -\frac{4700}{891}\alpha+\frac{75}8$.
Note, when $r\ge 13$, that $p_{12}(r,\alpha) \ge 
r^4 \times p^{(4)}_{12}(\alpha)
+r^3 \times p^{(3)}_{12}(\alpha)
+r^2 \times p^{(2)}_{12}(\alpha)
+r \times p^{(1)}_{12}(\alpha)$.  So for any value of $\alpha$ such that $p_{12}^{(i)}(\alpha)\ge 0$ for
all $i\in[4]$, we get that $p_{12}(r,\alpha)\ge 0$ whenever $r\ge 13$.
For the $4$ polynomials $p^{(i)}_{12}(\alpha)$, in order of decreasing $i$ we get (supersets of) the ranges: 
$\alpha\in (1.2274, 1.435)$ and $\alpha\in (1,\infty)$ and $\alpha\in (1,1.9)$ and $\alpha\in (1,1.7)$.
Thus, the intersection is the range $(1.2274,1.435)$; see \Cref{12-fig} on the next page.
\end{proof}

\section{Handling \texorpdfstring{\bm{$r\in\{19,20,21,22,23,24\}$}}{r=19,20,21,22,23,24} and most of \texorpdfstring{\bm{$r\in\{25,26\}$}}{r=25,26}}
\label{tiny-r-sec}

In this section, we consider small values of $r$.
For easy reference, we recall a few results of other authors.

\begin{lemA}[{\cite[Corollary~11]{BT}}]
    \label{BT-vert-cor}
    An $r$-critical graph with at most $r+4$ vertices contains a subdivision of $K_r$ (and thus satisfies Albertson's Conjecture).
\end{lemA}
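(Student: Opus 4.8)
\smallskip
\noindent The plan is to prove the lemma by induction on $r$. Write $n:=|V(G)|$ and $d:=n-r\in\{0,1,2,3,4\}$; since $G$ is $r$-critical we have $\delta(G)\ge r-1$, so the complement $\overline G$ has maximum degree at most $d$. If $d=0$ then $G=K_r$, so assume $d\ge 1$.

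First I would dispose of the case that $G$ is a join, $G=G_1+G_2$ with $G_1,G_2$ vertex-disjoint and nonempty (equivalently, $\overline G$ is disconnected). By standard facts about joins, each $G_i$ is $r_i$-critical with $r_1+r_2=r$, so $r_i<r$; and since $G_i$ has at least $r_i$ vertices, $|V(G_i)|-r_i\le(|V(G_1)|-r_1)+(|V(G_2)|-r_2)=d\le 4$. By the inductive hypothesis, $G_i$ contains a subdivision $H_i$ of $K_{r_i}$ with branch set $B_i\subseteq V(G_i)$. Now take $B_1\cup B_2$, which has exactly $r$ vertices, as the branch set of a subdivision of $K_r$ in $G$: realize each missing edge inside $B_1$ by its path in $H_1$, each missing edge inside $B_2$ by its path in $H_2$, and each pair with one endpoint in each $B_i$ by the edge joining the two (present since $G$ is a join). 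These paths are pairwise internally disjoint, since the internal vertices of $H_1$ lie in $V(G_1)$, those of $H_2$ lie in $V(G_2)$, the edges across have no internal vertices, and $V(G_1)\cap V(G_2)=\varnothing$. So $G$ contains a subdivision of $K_r$. In particular this covers the case that $\overline G$ has an isolated vertex (that is, a universal vertex of $G$), so from now on $G$ is not a join.

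For the non-join case I would invoke Gallai's decomposition theorem~\cite{gallai2}: every $k$-critical graph on at most $2k-2$ vertices is a join. Hence a non-join $r$-critical graph with $n\le r+4$ must satisfy $r+4\ge 2r-1$, i.e.\ $r\le 5$. The cases $r\le 4$ need nothing new: a $3$-critical graph is an odd cycle, which is a subdivision of $K_3$, and every $4$-critical graph contains a subdivision of $K_4$ by a classical theorem of Dirac. This leaves exactly one residual situation: $G$ is a non-join $5$-critical graph on $n=9$ vertices.

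The residual case is where the real work lies, and I would handle it by a direct construction: choose a set $B$ of $r$ branch vertices with $|E(\overline G[B])|\le d$, then route each non-edge $xy$ of $G$ with $x,y\in B$ along a length-$2$ path $x,s,y$ through a private vertex $s$ of $S:=V(G)\setminus B$. This amounts to finding a system of distinct representatives---for each such pair $\{x,y\}$, a distinct $s\in S$ adjacent in $G$ to both $x$ and $y$---which Hall's theorem provides once the relevant degrees are small enough. For $r=5$ and $n=9$ we have $|S|=4$ and $\Delta(\overline G)\le 4$; by \Cref{Gallai-edge-lem} (applied with $r=5$, $n=9$) we get $|E(G)|\ge 19$, so $|E(\overline G)|\le\binom{9}{2}-19=17$; and we may take $B\supseteq I$ for a maximum independent set $I$ of $\overline G$ (equivalently, a maximum clique of $G$), noting that if $|I|\ge 5$ then already $K_5\subseteq G$. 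The main obstacle---and the only genuinely computational part of the proof---is to verify that when $|I|=4$, so that $B=I\cup\{v\}$ and $\overline G[B]$ consists of at most four edges, all incident to $v$, the required distinct representatives in $S$ always exist; this is a short finite case analysis using $\Delta(\overline G)\le 4$ together with the connectivity of $\overline G$ (and, if needed, the bound $|E(\overline G)|\le 17$). Everything else is routine bookkeeping, and the final clause of the lemma is then immediate, since a subdivision $G'$ of $K_r$ inside $G$ gives $\Cr(K_r)=\Cr(G')\le\Cr(G)$.
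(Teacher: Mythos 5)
This statement is quoted from Bar\'at--T\'oth (their Corollary~11); the present paper gives no proof of it, so there is nothing internal to compare against. Judged on its own terms, your reduction is sound and is essentially the standard route: the join case combines subdivisions across the join correctly; \Cref{gallai-join-lem} forces a non-join $r$-critical graph on at most $r+4$ vertices to satisfy $2r-1\le n\le r+4$, hence $r\le 5$; and $r\le 4$ is Dirac. All of that is fine.

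The genuine gap is exactly where you say ``the real work lies'': the non-join $5$-critical graphs on $9$ vertices, which you reduce to a finite check but do not carry out. Two concrete holes. First, your construction of the branch set $B$ presupposes $\omega(G)\ge 4$ (so that $B=I\cup\{v\}$ with all non-edges of $G[B]$ incident to $v$); you never rule out $\omega(G)=3$. This is not free: the bounds you have available ($\Delta(\overline G)\le 4$, $|E(\overline G)|\le 17$ via \Cref{Gallai-edge-lem}) give only $\alpha(\overline G)\ge 2$ by greedy/Caro--Wei, and excluding a $K_4$-free $5$-critical graph on $9$ vertices requires a real argument (or an appeal to a known result on the minimum order of $K_4$-free $5$-chromatic graphs), not just degree counting. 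Second, even in the case $\omega(G)=4$, the Hall-condition verification is asserted rather than performed, and it is not automatic: with $|S|=4$ and up to four non-edges at $v$, a vertex $x_i\in I$ may have as few as one common $G$-neighbour with $v$ in $S$, so one must genuinely show that Hall's condition cannot fail (the obvious failure modes do lead to contradictions, e.g.\ a hidden $K_5$, but that is the case analysis, and it is the entire content of the residual case). As written, the proof establishes the lemma only modulo the statement ``every non-join $5$-critical graph on $9$ vertices contains a subdivision of $K_5$,'' which is precisely the part that cannot be waved away.
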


\begin{lemA}[\cite{KS}]
    \label{KS-edge-lem}
    If $G$ is an $n$-vertex $r$-critical graph, with $n\ne 2r-1$, then $|E(G)|\ge n(r-1)/2+(r-3)$.
\end{lemA}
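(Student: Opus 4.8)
The plan is to prove the equivalent inequality $2|E(G)|\ge(r-1)n+2(r-3)$. Take $r\ge 4$ and $G\ne K_r$; the latter is automatic in all our applications, and it is needed (as is the hypothesis $n\ne 2r-1$), since the inequality fails for $K_r$ itself and for the Hajós join of two copies of $K_r$, a graph on $2r-1$ vertices. Since no proper subgraph of an $r$-critical graph is $r$-chromatic, $G$ is $K_r$-free. I would first dispose of small $n$: if $r+2\le n\le 2r-2$, then with $j:=n-r\in\{2,\dots,r-2\}$ we have $(n-r)(2r-n)=j(r-j)\ge 2(r-2)$ (a downward parabola on $[2,r-2]$ is minimized at an endpoint), and then \Cref{Gallai-edge-lem} already gives $2|E(G)|\ge(r-1)n+(n-r)(2r-n)-2\ge(r-1)n+2(r-3)$. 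So it remains to handle $n\ge 2r$ (the value $n=2r-1$ being excluded).

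For $n\ge 2r$, I would reduce to an excess-degree estimate. Recall $\delta(G)\ge r-1$, so set $L:=\{v:\deg_G v=r-1\}$ (the low vertices), $H:=V(G)\setminus L$, $h:=|H|$, and $\Delta:=2|E(G)|-(r-1)n=\sum_{w\in H}(\deg_G w-(r-1))$; the goal becomes $\Delta\ge 2(r-3)$. The structural input is Gallai's theorem \cite{gallai2}: $D:=G[L]$ is a \emph{Gallai forest}, i.e.\ each block of $D$ is a clique or an odd cycle. I would use two refinements: every clique block of $D$ has at most $r-1$ vertices (otherwise $G\supseteq K_r$), and every vertex of $D$ has degree at most $r-1$ within $D$.

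The heart of the argument is a per-component estimate. For a component $D_i$ of $D$, the number of edges from $D_i$ to $H$ equals $(r-1)\,|V(D_i)|-2e(D_i)$, and bounding $e(D_i)$ through the block structure (a clique $K_b$ block contributes $\binom b2\le\frac{r-1}2(b-1)$ toward a tree of blocks, an odd cycle contributes $b\le\frac 32(b-1)\le\frac{r-1}2(b-1)$ since $r\ge 4$) shows this quantity is at least $r-1$ always, and at least $2(r-3)$ unless $D_i\cong K_{r-1}$ — equality in the block bound forces all blocks to be $K_{r-1}$, and then the degree constraint forces $D_i$ to be a single such block. Moreover, if $D_i\cong K_{r-1}$ its $r-1$ edges to $H$ reach at least two distinct vertices of $H$, since otherwise $G$ contains $K_r$. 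Hence $e(L,H)\ge(r-1)c$ with $c$ the number of components of $D$, and $\Delta=e(L,H)+2e(H)-(r-1)h$. Combined with the trivial $\Delta\ge h$, these facts settle every configuration except ``thin'' ones: $D$ a single Gallai tree sending all of its excess to one high vertex, or $D$ a small union of $K_{r-1}$-components attached to just a few high vertices. For those I would have to use $r$-criticality itself — that deleting any edge lowers $\chi$ — and not merely Gallai's structure: for instance, ``two $K_{r-1}$'s joined to two high vertices'' is $(r-1)$-colorable (permute the two clique colorings so each high vertex's pair of neighborhoods shares a color), hence not $r$-critical. The intended conclusion is that any non-complete $r$-critical $G$ with $\Delta<2(r-3)$ must be one of Gallai's extremal graphs for the weaker bound $2|E|\ge(r-1)n+(r-3)$, all of which (apart from $K_r$) have exactly $2r-1$ vertices — contradicting $n\ge 2r$. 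I expect this last step — eliminating every thin configuration without the $n=2r-1$ escape hatch — to be the main obstacle, and it is precisely why $n=2r-1$ is excluded from the hypothesis.
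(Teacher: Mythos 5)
The paper does not prove this lemma at all --- it is imported verbatim from Kostochka and Stiebitz \cite{KS}, so there is no in-paper argument to compare yours against; I can only judge your attempt against what the cited theorem actually requires. Your reduction for $r+2\le n\le 2r-2$ is correct and clean: $(n-r)(2r-n)=j(r-j)\ge 2(r-2)$ for $j\in\{2,\dots,r-2\}$, so \Cref{Gallai-edge-lem} already gives $2|E(G)|\ge (r-1)n+2(r-2)-2=(r-1)n+2(r-3)$ in that range. You are also right that the statement as printed silently needs $G\ne K_r$ (i.e.\ $n\ne r$), and that $n=r+1$ is vacuous.

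The genuine gap is the case $n\ge 2r$, which is the entire content of the Kostochka--Stiebitz theorem, and which you do not prove: you outline a strategy and then explicitly concede that ``eliminating every thin configuration'' is an unresolved obstacle. That obstacle is not a loose end one can wave at --- it is the hard part of \cite{KS}, whose proof is a substantial argument (a careful analysis of how low-vertex Gallai-tree components attach to high vertices, combined with recolouring arguments exploiting criticality) occupying most of a research paper. Moreover, the dichotomy you state inside the sketch is already false as written: a component $D_i$ of $G[L]$ consisting of a single vertex (a $K_1$ block) also sends exactly $r-1$ edges to $H$, so for $r\ge 6$ the claim ``$e(V(D_i),H)\ge 2(r-3)$ unless $D_i\cong K_{r-1}$'' fails, and your accounting of the excess $\Delta$ via $e(L,H)\ge (r-1)c$ does not by itself rule out, say, a single large Gallai-tree component with all excess absorbed by one high vertex. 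So the proposal establishes the lemma only for $n\le 2r-2$; for $n\ge 2r$ it is an honest but incomplete plan, and completing it would amount to reproving \cite{KS} rather than filling in routine details.
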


To handle the case in \Cref{KS-edge-lem} when $n=2r-1$, Bar\'{a}t and T\'{o}th used \Cref{KS-edge-lem} 
to prove the following.

\begin{lemA}[{\cite[Corollary 7]{BT}}]
    \label{BT-edge-cor}
    Let $G$ be an $n$-vertex $r$-critical graph with $r\ge 4$.  If $G$ does not contain a subdivision of $K_r$,
    then $|E(G)|\ge n(r-1)/2+(r-3)$.
\end{lemA}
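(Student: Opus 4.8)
The plan is to split on the number of vertices $n$, and to observe that the hypothesis about $K_r$-subdivisions is needed in only one case. If $n\ne 2r-1$, then \Cref{KS-edge-lem} applies verbatim and already gives $|E(G)|\ge n(r-1)/2+(r-3)$, with no use of the subdivision hypothesis. So the entire content of the corollary lies in the case $n=2r-1$ --- exactly the case excluded from \Cref{KS-edge-lem}, and not by accident: $r$-critical graphs on $2r-1$ vertices meeting the weaker bound of \Cref{Gallai-edge-lem} genuinely exist. For instance, the Haj\'os join of two copies of $K_r$ (identify a vertex $x_1=x_2=:w$, delete the edges $x_1y_1$ and $x_2y_2$, add $y_1y_2$) has $2r-1$ vertices and $r^2-r-1$ edges, which is strictly below $n(r-1)/2+(r-3)$ for $r\ge 4$; yet it does contain a subdivision of $K_r$, namely with branch set $V(K_r^{(2)})$, where the single missing edge $wy_2$ is replaced by the path $w\,u\,y_1\,y_2$ for any $u\in V(K_r^{(1)})\setminus\{w,y_1\}$. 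This example is the template for the whole argument.

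So assume $n=2r-1$. If $r\le 5$ then $n\le r+4$, so by \Cref{BT-vert-cor} the graph $G$ already contains a subdivision of $K_r$, contrary to hypothesis; hence $r\ge 6$. If $\delta(G)\ge r$, then $2|E(G)|\ge rn=r(2r-1)=(2r-1)(r-1)+(2r-1)\ge(2r-1)(r-1)+2(r-3)$, so $|E(G)|\ge n(r-1)/2+(r-3)$ and we are done; hence $G$ has at least one vertex of degree exactly $r-1$. Let $L$ be the set of such (low) vertices. Summing degrees gives $2|E(G)|\ge rn-|L|$, so if $|L|\le 5$ we again reach $|E(G)|\ge n(r-1)/2+(r-3)$. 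It therefore suffices to prove: if $|L|\ge 6$, then $G$ contains a subdivision of $K_r$ --- this contradicts the hypothesis and finishes the proof.

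To prove that, I would combine two inputs. First, Gallai's structure theorem: the subgraph $G[L]$ is a Gallai forest, i.e.\ each of its blocks is a complete graph or an odd cycle; in particular no block of $G[L]$ is a $K_r$ (else $G=K_r$ by criticality, impossible since $n>r$). Second, \Cref{Gallai-edge-lem}, which --- in the regime where we are not already done --- confines $|E(G)|$ to a window of width less than $r/2$ just above the minimum $r^2-r-1$. These two facts together force $G$ to be very close to extremal for \Cref{Gallai-edge-lem}, and the extremal and near-extremal $r$-critical graphs on $2r-1$ vertices are built from copies of $K_{r-1}$ by Haj\'os-type joins onto the low-vertex part; for each such graph one produces a $K_r$-subdivision by the same move used above for the two-$K_r$ Haj\'os join. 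To keep the case analysis finite I would first reduce to $G$ being $3$-connected: an $r$-critical graph with a $2$-cut $\{a,b\}$ satisfies $ab\notin E(G)$ (no separating set of a critical graph is a clique) and splits along $\{a,b\}$ into strictly smaller pieces, to which one reduces; then I would split on the largest clique $Q$ in $G[L]$, the extreme case being $|Q|=r-1$, where counting shows exactly $r-1$ edges leave $Q$, not all to a single vertex (else $G=K_r$), and a short disjoint-paths argument yields the $K_r$-subdivision on $Q$ together with one endpoint outside $Q$.

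The main obstacle is exactly this last structural step. Because the target bound is essentially tight for $n=2r-1$, pure edge-counting cannot close the gap: one must genuinely understand the $r$-critical graphs on $2r-1$ vertices whose edge count is within roughly $(r-3)/2$ of the Gallai minimum and exhibit a $K_r$-subdivision in each. The delicate part is the disjoint-paths bookkeeping --- routing the required internally disjoint paths among the chosen branch vertices through the sparse ``outside'' part of $G$ --- and making this go through uniformly in $r$; the $3$-connectivity reduction and the Gallai-forest structure of $G[L]$ are the tools that should keep that analysis bounded.
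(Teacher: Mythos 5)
This statement is quoted in the paper as \cite[Corollary~7]{BT} and is not proved there, so there is no in-paper argument to match yours against; judging your proposal on its own terms, the reductions you do carry out are correct, but the proof has a genuine gap exactly where you flag one. Your first two paragraphs are fine: for $n\ne 2r-1$ the claim is \Cref{KS-edge-lem} verbatim; for $n=2r-1$ and $r\le 5$ we have $n\le r+4$ and \Cref{BT-vert-cor} applies; the degree count $2|E(G)|\ge rn-|L|$ correctly disposes of $|L|\le 5$; and your Haj\'os-join example (with its explicit $K_r$-subdivision) correctly explains why the subdivision hypothesis cannot be dropped at $n=2r-1$. One small logical slip: ``it suffices to prove that $|L|\ge 6$ implies a $K_r$-subdivision'' is stronger than what you need (and possibly false); the correct reduction, which you implicitly adopt later, is that $|L|\ge 6$ \emph{together with} $|E(G)|<n(r-1)/2+(r-3)$ must force a subdivision.

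The gap is that this remaining case --- which is the entire content of the corollary beyond \Cref{KS-edge-lem} --- is only a plan, not a proof. What is actually needed, and what Bar\'at and T\'oth use, is the full Kostochka--Stiebitz theorem, of which \Cref{KS-edge-lem} is only the easy-to-quote half: every $r$-critical graph $G\ne K_r$ with $2|E(G)|<(r-1)n+2(r-3)$ has $n=2r-1$ and is one of an explicitly described family of compositions of complete graphs (iterated Haj\'os-type constructions generalizing your two-$K_r$ example); Corollary~7 of~\cite{BT} then exhibits a $K_r$-subdivision in each member of that family, much as you do for the simplest one. Your proposed route --- Gallai forests, a $3$-connectivity reduction, and a case split on the largest clique of $G[L]$ --- is plausible as a way to rederive that characterization, but the disjoint-paths routing and the uniform-in-$r$ case analysis you defer are precisely where the work lives, and nothing in the proposal shows they close. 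As written, the argument proves the lemma only for $n\ne 2r-1$ and for $n=2r-1$ with $r\le 5$ or $|L|\le 5$; the hard case is asserted, not established.
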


Now we prove the main lemma of this section.

\begin{lem}
	Let $G$ be an $r$-critical graph with order $n$, with $r\ge 4$. 
	If $r<n<2r$, then inequalities \eqref{k=12-bound} and \eqref{k=22-bound} below hold.
	And if $G$ has no subdivision of $K_r$, then inequality \eqref{k=24-bound} below holds.
	\label{k-subgraph-bounds-lem}
\end{lem}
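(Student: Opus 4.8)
The plan is to obtain all three inequalities by specializing one general estimate and feeding in the best available lower bound on the edge count. The general estimate is the ``random $k$-vertex subgraph'' bound \eqref{smart-bound} derived in the course of proving \Cref{first-thm}: for every integer $k$ with $4\le k\le n$,
\[
\Cr(G)\ \ge\ 5m\,\frac{(n-2)(n-3)}{(k-2)(k-3)}\ -\ \frac{203\,n(n-1)(n-2)(n-3)}{9\,k(k-1)(k-3)},
\]
where $m:=|E(G)|$. Each of \eqref{k=12-bound}, \eqref{k=22-bound}, \eqref{k=24-bound} is exactly what this estimate becomes after choosing $k\in\{12,22,24\}$ respectively and replacing $m$ by an appropriate lower bound; the only work is checking that the quoted edge bound and the condition $k\le n$ are in force, and then expanding.

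For \eqref{k=12-bound} and \eqref{k=22-bound} we are in the case $r<n<2r$. Here I would first observe that there is no $r$-critical graph on exactly $r+1$ vertices (indeed $K_r$ is the only $r$-critical graph on at most $r+1$ vertices), so in fact $r+2\le n\le 2r-1$, which is precisely the hypothesis of \Cref{Gallai-edge-lem}. That lemma gives $m\ge\bigl((r-1)n+(n-r)(2r-n)-2\bigr)/2$. Substituting this lower bound on $m$ into the displayed estimate, once with $k=12$ and once with $k=22$, and expanding the resulting expressions (polynomials in $r$ and $n$), yields \eqref{k=12-bound} and \eqref{k=22-bound} respectively. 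Before invoking the estimate one confirms $k\le n$: for $k=12$ this needs only $n\ge 12$, and for $k=22$ it needs $n\ge 22$. In every application of this lemma the remaining possibilities $n\le r+4$ are already disposed of by \Cref{BT-vert-cor} (such graphs contain a subdivision of $K_r$), so we may assume $n\ge r+5$, and for the values of $r$ treated in this section this makes both conditions automatic.

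For \eqref{k=24-bound} the hypothesis is instead that $G$ has no subdivision of $K_r$. Then \Cref{BT-edge-cor} applies and supplies the (weaker, but $n$-unrestricted) edge bound $m\ge n(r-1)/2+(r-3)$. Substituting this into the displayed estimate with $k=24$ and expanding gives \eqref{k=24-bound}. Once more we must have $k=24\le n$; but \Cref{BT-vert-cor} forces $n\ge r+5$ when no $K_r$-subdivision is present, and $r\ge 19$ throughout this section, so $n\ge 24$.

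I do not expect a genuine obstacle here: the content of the lemma lies in the choice of the three values of $k$ and in pairing each with the right edge bound (Gallai's sharper bound when $r<n<2r$, the Bar\'{a}t--T\'{o}th bound when no $K_r$-subdivision exists), not in any inequality that calls for ingenuity. The only points that need care are bookkeeping ones --- verifying $k\le n$, verifying $r+2\le n\le 2r-1$ before quoting \Cref{Gallai-edge-lem}, and verifying the no-subdivision hypothesis before quoting \Cref{BT-edge-cor} --- together with the purely mechanical expansion of the three resulting quartics, which I would perform with computer algebra and then record as \eqref{k=12-bound}--\eqref{k=24-bound}.
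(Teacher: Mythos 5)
Your proposal matches the paper's proof essentially verbatim: the paper also derives all three inequalities by specializing the $k$-vertex sampling bound \eqref{smart-bound} to $k=12,22,24$ and substituting the Gallai edge bound (\Cref{Gallai-edge-lem}) for the first two and the Bar\'at--T\'oth bound (\Cref{BT-edge-cor}) for the third. Your extra bookkeeping (ruling out $n=r+1$ before invoking \Cref{Gallai-edge-lem}, and checking $k\le n$) is sound and, if anything, slightly more careful than the paper's write-up.
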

\begin{proof}
	For all 3 bounds, we fix a positive integer $k$ and sample uniformly the $k$-vertex subgraphs of 
	$G$, and apply \Cref{best-crossing-thm}(i) to our sampled graph.  Our proof follows that of the 
	second case in \Cref{first-thm} up to inequality~\eqref{smart-bound}, which we reproduce below 
	for easy reference.  

$$
	\Cr(G) \ge 5m\frac{(n-2)(n-3)}{(k-2)(k-3)}
	-\frac{203n(n-1)(n-2)(n-3)}{9k(k-1)(k-3)}.
$$

	For the first bound, let $k:=12$ and use $m\ge (n(r-1)+(n-r)(2r-n)-2)/2$ from \Cref{Gallai-edge-lem}.
	\begin{align}
		\Cr(G) \ge 5\frac{((r-1)n+(n-r)(2r-n)-2)}2\frac{(n-2)(n-3)}{(12-2)(12-3)}
		-\frac{203n(n-1)(n-2)(n-3)}{9(12)(12-1)(12-3)}. \label{k=12-bound}
	\end{align}

	For the second bound, let $k:=22$ and use $m\ge (n(r-1)+(n-r)(2r-n)-2)/2$ from \Cref{Gallai-edge-lem}.
	\begin{align}
		\Cr(G) \ge 5\frac{((r-1)n+(n-r)(2r-n)-2)}2\frac{(n-2)(n-3)}{(22-2)(22-3)}
		-\frac{203n(n-1)(n-2)(n-3)}{9(22)(22-1)(22-3)}. \label{k=22-bound}
	\end{align}

	For the third bound, let $k:=24$ and use the bound $m\ge n(r-1)/2+(r-3)$ from \Cref{BT-edge-cor}.

	\begin{align}
		\Cr(G) \ge 5\frac{((r-1)n+2(r-3))}2\frac{(n-2)(n-3)}{(24-2)(24-3)}
		-\frac{203n(n-1)(n-2)(n-3)}{9(24)(24-1)(24-3)}. \label{k=24-bound} 
	\end{align}

This concludes the proof.
\end{proof}
To prove the main result of this section, we need the following technical result.
The bounds it gives are generally weaker than those given by \Cref{k-subgraph-bounds-lem}, if we
instead choose appropriate values of $k$ in place of $24$.  But the calculations required to use it
are simpler, and we can thus exclude more values of $n$ all at once.

The proof closely mirrors that of analogous lemmas in~\cite{ackerman} and~\cite{BT}, so we just provide
a brief proof sketch.

\begin{lem}
    \label{cr-prop}
    Let $G$ be an $r$-critical graph with $r\ge 15$ and $n$ vertices and $m$ edges.
    Let $$\Cr(n,m,p) := \frac{5m}{p^2} - \frac{203n}{9p^3}+\frac{406}{9p^4}-0.5.$$ 
    Whenever $0.5\le p\le 1$, we have $\Cr(G)\ge \Cr(n,m,p)$.
\end{lem}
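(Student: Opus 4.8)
The plan is to reuse the random-subgraph sampling argument from the proof of \Cref{first-thm} (see also \Cref{k-subgraph-bounds-lem}), for a carefully chosen integer $k$ depending on $n$ and $p$, and then to play off the slack in the edge-term of \eqref{smart-bound} against its lower-order terms, using the fact that $G$ being $r$-critical forces $m$ to be large.

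First record the two facts about $G$ that will be used: since $G$ is $r$-critical with $r\ge 15$, we have $n\ge r\ge 15$, and (as every $r$-critical graph has minimum degree at least $r-1$) also $m\ge (r-1)n/2\ge 7n$. Now fix $p\in[\tfrac12,1]$ and set $k:=\left\lfloor pn+\tfrac52(1-p)\right\rfloor$. Using $n\ge 15$ and $\tfrac12\le p\le 1$, one checks $4\le k\le n$, so the sampling argument applies: choosing a uniformly random $k$-vertex induced subgraph and applying \Cref{best-crossing-thm}(i) to it (valid since $k\ge 3$), exactly as in the derivation of \eqref{smart-bound}, yields
$$\Cr(G)\ \ge\ 5m\,\frac{(n-2)(n-3)}{(k-2)(k-3)}\ -\ \frac{203\,n(n-1)(n-2)(n-3)}{9k(k-1)(k-3)}.$$

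Next handle the edge-term. From $(n-2)(n-3)=(n-\tfrac52)^2-\tfrac14$, $(k-2)(k-3)=(k-\tfrac52)^2-\tfrac14$, and $n\ge k$, we get $\frac{(n-2)(n-3)}{(k-2)(k-3)}\ge\frac{(n-5/2)^2}{(k-5/2)^2}$; and the choice of $k$ guarantees $k-\tfrac52\le p(n-\tfrac52)$, so in fact $\frac{(n-2)(n-3)}{(k-2)(k-3)}\ge\frac1{p^2}$. In particular the quantity $\frac{(n-2)(n-3)}{(k-2)(k-3)}-\frac1{p^2}$ is nonnegative, so it may be multiplied by the lower bound $m\ge 7n$. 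After rearranging, the target inequality $\Cr(G)\ge\Cr(n,m,p)$ thus reduces to
$$35\,n\left(\frac{(n-2)(n-3)}{(k-2)(k-3)}-\frac1{p^2}\right)\ \ge\ \frac{203}{9}\left(\frac{n(n-1)(n-2)(n-3)}{k(k-1)(k-3)}-\frac n{p^3}\right)+\frac{406}{9p^4}-\frac12 .$$
Here both sides are $\Theta(1)$ as $n\to\infty$ (the left bracket is $\Theta(1/n)$, while the right bracket converges to a constant), so this is a genuine inequality between bounded rational functions of $n$ and $p$.

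The remaining task --- proving the displayed inequality for every integer $n\ge 15$ and every $p\in[\tfrac12,1]$ --- is the main obstacle and the only real computation. I would write $k=pn+\delta$ with $\delta\in(\tfrac52(1-p)-1,\ \tfrac52(1-p)]$, clear denominators to obtain a polynomial inequality in $n$, $p$, $\delta$, and then argue that the worst case is at the boundary: one shows the difference (left side minus right side) is increasing in $n$ on $[15,\infty)$ for each fixed $p$ and $\delta$, and then checks $n=15$ against the extreme admissible $\delta$ numerically. This is exactly the style of estimate carried out for the analogous lemmas of \cite{BT} and \cite{ackerman}, which is why only a sketch is needed here. The slack constants $\tfrac{406}{9p^4}$ and $-\tfrac12$ in the definition of $\Cr(n,m,p)$ are precisely what makes the verification succeed: they absorb the finite-$n$ corrections and the integrality error created by rounding $pn+\tfrac52(1-p)$ down to $k$.
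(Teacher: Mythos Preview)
Your approach is genuinely different from the paper's, and it has a concrete gap.

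\textbf{What the paper does.} The paper does \emph{not} pick a fixed $k$ and invoke inequality~\eqref{smart-bound}. Instead it samples a random induced subgraph $\widehat{G}$ by including each vertex of $G$ independently with probability $p$ (Bernoulli sampling), and then applies \Cref{best-crossing-thm}(i) to $\widehat{G}$ together with linearity of expectation. Since a crossing, an edge, and a vertex survive with probabilities $p^4$, $p^2$, $p$ respectively, one gets immediately
\[
p^{4}\,\Cr(G)\ \ge\ 5p^{2}m\ -\ \tfrac{203}{9}\bigl(pn-2\bigr)\ -\ (\text{error}),
\]
where the error term accounts only for the event $|\widehat{G}|\le 2$, on which \Cref{best-crossing-thm}(i) need not hold. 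Dividing by $p^{4}$ yields exactly the shape of $\Cr(n,m,p)$, and the constant $-0.5$ absorbs the (tiny) error. So the exponents $p^{-2},p^{-3},p^{-4}$ and the additive $\tfrac{406}{9p^{4}}$ are not targets to be hit by a clever choice of $k$; they fall out automatically.

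\textbf{Why your route fails as written.} Your reduced inequality is false at the boundary point $n=15$, $p=\tfrac12$. There your recipe gives $k=\lfloor 7.5+1.25\rfloor=8$, and inequality~\eqref{smart-bound} with $k=8$ yields
\[
\Cr(G)\ \ge\ 5m\cdot\frac{13\cdot 12}{6\cdot 5}\ -\ \frac{203\cdot 15\cdot 14\cdot 13\cdot 12}{9\cdot 8\cdot 7\cdot 5}\ =\ 26m-2639.
\]
For $G=K_{15}$ (the unique $15$-critical graph on $15$ vertices, with $m=105=7n$) this gives only $\Cr(G)\ge 91$, whereas $\Cr(15,105,\tfrac12)=2100-\tfrac{203\cdot 15}{9\cdot 0.125}+\tfrac{406}{9\cdot 0.0625}-0.5\approx 114.6$. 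Equivalently, your displayed inequality evaluates to $630\ge 653.6$ at this point, which is false. So the ``remaining computation'' you defer cannot succeed: no amount of boundary analysis will verify an inequality that is already violated at $(n,p)=(15,\tfrac12)$. The choice $k=\lfloor pn+\tfrac52(1-p)\rfloor$, which you tuned to make the edge coefficient at least $1/p^{2}$, is simply too small to control the vertex term there; a single fixed $k$ cannot replicate the averaging that Bernoulli sampling performs across all subgraph sizes.
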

\begin{proof}[Proof sketch.]
	We select each vertex of $G$ with probability $p$ and apply \Cref{best-crossing-thm}(i), and linearity
	of expectation, to the resulting subgraph $\widehat{G}$.  We get the small error term $-0.5$ to 
	account for the (low probability) event that $\widehat{G}$ has at most $2$ vertices, in which case 
	\Cref{best-crossing-thm}(i) does not apply.
\end{proof}

Now we can prove \Cref{thm2}.  For convenience, we restate it below.

	\begin{table}[!b]
\begin{center}
	\renewcommand{\arraystretch}{1.15}
\begin{tabular}{c|c|c|c|c|c|c|cc|c}
	$r$ & $\Cr(K_r)\le$ & Lem.~\ref{BT-vert-cor} & Ineq.~\ref{k=12-bound} & Thm.~\ref{thm1b} & Ineq.~\ref{k=22-bound} & Ineq.~\ref{k=24-bound} & Lem.~\ref{cr-prop} & $p$ & possible $n$\\
	\hline
	\hline
	15 & ~~441 & [15, 19] & [17, 22] & [19, 26] & [22, 29] & [26, 36] & $[22,\infty)$ & 0.75 & -\\
	16 & ~~588 & [16, 20] & [19, 23] & [20, 28] & [24, 31] & [28, 38] & $[25,\infty)$ & 0.72 & -\\
	17 & ~~784 & [17, 21] & [20, 25] & [21, 29] & [25, 33] & [30, 40] & $[29,\infty)$ & 0.68 & -\\
	18 &  1008 & [18, 22] & [21, 26] & [23, 31] & [27, 35] & [32, 42] & $[32,\infty)$ & 0.65 & -\\
    \hline
	19 &  1296 & [19, 23] & [22, 28] & [24, 33] & [29, 36] & [35, 44] & $[35,\infty)$ & 0.62 & -\\
	20 &  1620 & [20, 24] & [23, 29] & [25, 35] & [30, 38] & [37, 46] & $[38,\infty)$ & 0.60 & -\\
	21 &  2025 & [21, 25] & [25, 31] & [26, 37] & [32, 40] & [39, 48] & $[42,\infty)$ & 0.58 & -\\
	22 &  2475 & [22, 26] & [26, 32] & [28, 38] & [34, 42] & [41, 50] & $[44,\infty)$ & 0.56 & -\\
	23 &  3025 & [23, 27] & [27, 34] & [29, 40] & [36, 44] & [44, 52] & $[48,\infty)$ & 0.54 & -\\
	24 &  3630 & [24, 28] & [28, 35] & [30, 42] & [37, 45] & [46, 54] & $[51,\infty)$ & 0.52 & -\\
	\hline
	\hline
	25 &  4356 & [25, 29] & [30, 36] & [31, 44] & [39, 47] & [49, 55] & $[54,\infty)$ & 0.50 & 48\\
	26 &  5148 & [26, 30] & [31, 38] & [32, 45] & [40, 49] & [52, 57] & $[58,\infty)$ & 0.50 & 50, 51\\
\end{tabular}
\end{center}
		\captionsetup{width=.515\textwidth}
		\caption{The proof of \Cref{thm2}. This table can be generated with a small bit of 
		Python code, which we provide in Appendix~\hyperref[appendix-B]{B}.\label{table1}}
	\end{table}

\begin{thmTwo}
	Let $G$ be an $r$-critical graph.  If $r\le 24$, then $\Cr(G)\ge \Cr(K_r)$.
	And if $r\le 26$ and $\Cr(G) < \Cr(K_r)$, then $(r,|G|)\in\{(25,48),(26,50),(26,51)\}$.
\end{thmTwo}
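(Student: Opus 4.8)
The plan is to run a finite case analysis over $r\in\{4,5,\dots,26\}$, reducing everything to the bounds collected above and the numerical data in \Cref{table1}. For $r\le 18$ the conjecture is already known by \cite{ACF,BT,ackerman}, so the real work is $r\in\{19,\dots,26\}$, and by \Cref{first-thm} any $r$-critical counterexample with $r\ge 15$ must have $|G|<2.8118r$; combined with $\delta(G)\ge r-1$ (so $|G|\ge r+1$, and in fact $|G|\ge r+2$ since $K_{r+1}$ has crossing number $\ge\Cr(K_r)$ once $r\ge 5$, or simply because an $r$-critical graph on $r+1$ vertices would be $K_{r+1}$) this confines $n:=|G|$ to a short finite list for each $r$. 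I would first invoke \Cref{BT-vert-cor} to dispose of all $n\le r+4$: such graphs contain a subdivision of $K_r$ and hence satisfy Albertson's Conjecture. So it remains to check $r+5\le n\le \lceil 2.8118r\rceil-1$.

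Next I would split this remaining range of $n$ according to which edge bound is available. For $r+2\le n\le 2r-1$ (intersected with $n\ge r+5$) we have the strong Gallai bound \Cref{Gallai-edge-lem}, which feeds into Inequalities~\eqref{k=12-bound} and~\eqref{k=22-bound} of \Cref{k-subgraph-bounds-lem}; moreover \Cref{thm1b} already settles all $n$ with $1.228r\le n\le 1.768r$. For $n\ge 2r$ we instead use \Cref{KS-edge-lem} when $n\ne 2r-1$, or \Cref{BT-edge-cor} when we may assume no $K_r$-subdivision (if $G$ does contain one, we are done), giving Inequality~\eqref{k=24-bound}; and for all $n\ge$ (the threshold in the $p$-column) we use \Cref{cr-prop} with the listed value of $p$. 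Each of these yields, for a given $(r,n)$, an explicit lower bound on $\Cr(G)$, which I compare against the explicit upper bound $\Cr(K_r)\le \floor{r/2}\floor{(r-1)/2}\floor{(r-2)/2}\floor{(r-3)/2}/4$ tabulated in the second column. \Cref{table1} records, for each $r$ and each tool, the interval of $n$ for which that tool's lower bound is already $\ge\Cr(K_r)$; taking the union of these intervals (together with $[r+1,r+4]$ from \Cref{BT-vert-cor}) must cover $[r+1,\lceil 2.8118r\rceil-1]$. For $r\le 24$ the union is everything, proving the first assertion. For $r=25$ the only uncovered value is $n=48$, and for $r=26$ the uncovered values are $n=50$ and $n=51$, giving the second assertion. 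One still must confirm the upper endpoint: for $r\le 24$, $\lceil 2.8118r\rceil-1\le 67$, and the \Cref{cr-prop} column reaches $[51,\infty)$ at $r=24$ (and lower thresholds for larger-relative-to-$r$ ranges), so the tail is covered; for $r\in\{25,26\}$, \Cref{cr-prop} covers $[54,\infty)$ and $[58,\infty)$ respectively, while \Cref{first-thm} (plus the $2.82r$ cutoff, which for $r=25,26$ means $n\le 72,73$) leaves only the gaps between the Ineq.~\eqref{k=24-bound} interval and the \Cref{cr-prop} interval — one checks these gaps are exactly $\{48\}$ and $\{50,51\}$, i.e. the computation in the table is consistent with $2.82r$-cutoff too, or else one directly notes $52,53$ for $r=25$ and $52,\dots,57$ for $r=26$ are covered by Ineq.~\eqref{k=24-bound} as the table shows.

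The main obstacle, such as it is, is bookkeeping rather than mathematics: one must verify that for every pair $(r,n)$ in the finite window, \emph{some} row/tool in \Cref{table1} certifies $\Cr(G)\ge\Cr(K_r)$, and that the three exceptional pairs genuinely survive \emph{all} the tools (not just the ones tabulated) — in particular that choosing other values of $k$ in \Cref{k-subgraph-bounds-lem}, or other $p$ in \Cref{cr-prop}, does not also kill $(25,48)$, $(26,50)$, $(26,51)$. Since the lemma statement of \Cref{thm2} only claims these \emph{might} be counterexamples, it suffices that the listed tools fail to exclude them, so I would simply remark that the table entries are exactly the output of the short Python routine in Appendix~B, which evaluates each bound at each $(r,n)$, and that this routine confirms the claimed coverage and the claimed residue. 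Thus the proof reduces to: (i) cite \cite{ACF,BT,ackerman} for $r\le 18$ and \Cref{thm1b} for the middle range; (ii) cite \Cref{first-thm} and $\delta(G)\ge r-1$ to bound $n$; (iii) cite \Cref{BT-vert-cor} for $n\le r+4$; (iv) appeal to \Cref{k-subgraph-bounds-lem}, \Cref{cr-prop}, and the numerics in \Cref{table1} for the rest.

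\begin{proof}
By \cite{ACF,BT,ackerman}, Albertson's Conjecture holds for all $r$-critical graphs with $r\le 18$, so assume $19\le r\le 26$, and suppose for contradiction that $G$ is $r$-critical with $\Cr(G)<\Cr(K_r)$; write $n:=|G|$. Since $G$ is $r$-critical, $\delta(G)\ge r-1$, so $n\ge r+1$; and $G\ne K_{r+1}$ (as $\Cr(K_{r+1})\ge\Cr(K_r)$), so in fact $n\ge r+2$. By \Cref{first-thm} (applicable since $r\ge 15$), $n<2.8118r$. By \Cref{BT-vert-cor}, if $n\le r+4$ then $G$ contains a subdivision of $K_r$ and hence satisfies Albertson's Conjecture, a contradiction; so $r+5\le n\le \lceil 2.8118r\rceil-1$. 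Finally, if $G$ contains a subdivision of $K_r$ we are again done, so assume it does not.

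For each such pair $(r,n)$ we derive a lower bound on $\Cr(G)$ and compare it to the upper bound $\Cr(K_r)\le\floor{r/2}\floor{(r-1)/2}\floor{(r-2)/2}\floor{(r-3)/2}/4$ listed in the second column of \Cref{table1}. When $r+5\le n\le 2r-1$ we may apply \Cref{Gallai-edge-lem}, and hence Inequalities~\eqref{k=12-bound} and~\eqref{k=22-bound} of \Cref{k-subgraph-bounds-lem}; \Cref{thm1b} moreover covers every $n$ with $1.228r\le n\le 1.768r$. When $n\ge 2r$ and $n\ne 2r-1$, \Cref{KS-edge-lem} applies; since $G$ has no $K_r$-subdivision, \Cref{BT-edge-cor} in fact gives $m\ge n(r-1)/2+(r-3)$ for all $n$, and hence Inequality~\eqref{k=24-bound} holds. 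And for all sufficiently large $n$ (the thresholds recorded in the ``Lem.~\ref{cr-prop}'' column, using the value of $p$ in the next column), \Cref{cr-prop} applies. For each of these tools and each pair $(r,n)$, the resulting lower bound on $\Cr(G)$ is an explicit function of $r$ and $n$; the interval of $n$ for which a given tool already yields a bound $\ge\Cr(K_r)$ is recorded in the corresponding column of \Cref{table1}.

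For $19\le r\le 24$, the union of these intervals, together with the interval $[r+1,r+4]$ handled by \Cref{BT-vert-cor}, covers the entire range $[r+1,\lceil 2.8118r\rceil-1]$ of possible values of $n$; this union is exhibited in \Cref{table1} and confirmed by the short computation in Appendix~\hyperref[appendix-B]{B}. Hence no such $G$ exists, and Albertson's Conjecture holds for all $r$-critical graphs with $r\le 24$.

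For $r=25$, the same union covers every $n\in[26,71]$ except $n=48$; and for $r=26$, it covers every $n\in[27,72]$ except $n=50$ and $n=51$ (again see \Cref{table1} and Appendix~\hyperref[appendix-B]{B}). Therefore if $G$ is $r$-critical with $r\le 26$ and $\Cr(G)<\Cr(K_r)$, then $(r,n)\in\{(25,48),(26,50),(26,51)\}$, as claimed.
\end{proof}
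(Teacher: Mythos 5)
Your proposal is correct and follows essentially the same route as the paper: reduce to $19\le r\le 26$ via prior work, then cover the possible orders $n$ by the union of the intervals certified by \Cref{BT-vert-cor}, Inequalities~\eqref{k=12-bound}, \eqref{k=22-bound}, \eqref{k=24-bound}, \Cref{thm1b}, and \Cref{cr-prop}, with the numerics delegated to \Cref{table1} and the Appendix~B computation. The only cosmetic difference is that you invoke \Cref{first-thm} to cap $n$ at $2.8118r$, whereas the paper lets the unbounded \Cref{cr-prop} intervals absorb the tail; both are valid.
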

\begin{proof}
    Previous work~\cite{ACF, BT, ackerman} handles the case when $r\le 18$.  
	So now we can assume that $r\ge 19$.  We include lines in \Cref{table1} for $r\in\{15,16,17,18\}$ 
	just to highlight that our proof here also easily yields those results.

	For each $r$, we split the range of possible orders into 6 intervals.  The first of these is
	excluded by \Cref{BT-vert-cor}, which says that every $r$-critical graph $G$ on at most $r+4$ 
	vertices contains a subdivision of $K_r$ (and hence has $\Cr(G)\ge \Cr(K_r)$).  The second interval 
	is excluded by Inequality~\eqref{k=12-bound}, which samples 12-vertex subgraphs and applies 
	\Cref{best-crossing-thm}(i).  The third interval is excluded by \Cref{thm1b}, which says that no 
	counterexample $G$ to Albertson's conjecture has $1.228r\le |G|\le 1.768r$.

	The fourth and fifth intervals are excluded by Inequalities~\eqref{k=22-bound} 
	and~\eqref{k=24-bound}, which sample
	the $k$-vertex subgraphs of $G$, with $k=22$ and with $k=24$.  Finally, the sixth interval is 
	excluded by \Cref{cr-prop}, which takes a random subgraph, including each vertex with probability 
	$p$.  We also show our choice of $p$.  For each of these intervals, the lemma yields a lower 
	bound on $\Cr(G)$; we must verify, for each $n$ in the interval, that this lower bound is at least 
	the value $\floor{r/2}\floor{(r-1)/2}\floor{(r-2)/2}\floor{(r-3)/2}/4$ shown in the second column.

	The entire table can be generated with a short Python program, which we provide in 
	Appendix~\hyperref[appendix-B]{B}.
\end{proof}

\section{Excluding more values of \texorpdfstring{$\bm{|G|}$}{|G|} when \texorpdfstring{$\bm{r}$}{r} is Large}
\label{small-r-sec}

Fox, Pach, and Suk~\cite{FPS} recently proved the following result.

\begin{thmA}[\cite{FPS}]
	\label{FPS-thm}
	For every $\epsilon>0$, there exists $r_{\epsilon}$ such that if $r\ge r_{\epsilon}$ and $\chi(G)=r$
	and $|G|\le r(1.64-o(1))$, then $G$ is not a counterexample to Albertson's Conjecture.
\end{thmA}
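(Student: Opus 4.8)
The plan is to follow the route sketched in the introduction (and carried out in~\cite{FPS}): reduce to an $r$-critical graph, build a \emph{weak immersion} of $K_r$ inside it, bound the crossings forced within that immersion, and then account separately for the crossings contributed by the remaining edges. Concretely, since every $r$-chromatic graph contains an $r$-critical subgraph with no more vertices, we may assume $G$ is $r$-critical with $n := |G| \le (1.64-\epsilon)r$ and $m := |E(G)|$ edges. Then $\delta(G)\ge r-1$, and since $n<2r$, \Cref{Gallai-edge-lem} gives $m$ comfortably above $(r-1)n/2$. I would then produce a weak immersion $G'$ of $K_r$ in $G$ (that is, $r$ branch vertices together with $\binom r2$ pairwise edge-disjoint connecting paths, where internal vertices and branch vertices may be reused), bound $\Cr(G')$ from below, find many further crossings among the edges of $G$ not lying on $G'$, and add the two estimates.

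The first step, constructing the weak immersion, is where the constant $1.64$ is paid and where I expect the real difficulty to lie. Designate any $r$ vertices as branch vertices. For each of the $\binom r2$ pairs $\{u,v\}$ of branch vertices a path must be routed; since $\deg(u),\deg(v)\ge r-1$ and $n\le(1.64-\epsilon)r$, we have $|N(u)\cap N(v)|\ge 2(r-1)-n\ge(0.36+\epsilon)r-2$, so every pair has many length-$2$ connections available. One routes the paths greedily, at each step choosing for the current pair a common neighbor $w$ whose two incident edges $uw,vw$ are still unused. The crux is to show the greedy process never gets stuck; this requires a careful counting/potential argument tracking, for each vertex $x$, how many of its incident edges have already been consumed (a branch vertex is the endpoint of exactly $r-1$ of the paths, so its degree budget is nearly exhausted, which is why the threshold is delicate), with the slack coming from $\epsilon$ together with the Gallai surplus in $m$. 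I expect this greedy-reachability argument to be the main obstacle.

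Given $G'$, the second step is to show $\Cr(G')\ge\Cr(K_r)-r^3/2$: contracting each connecting path of a drawn weak immersion produces a drawing of $K_r$, and the only crossings that can be lost in this contraction occur where distinct paths share an edge or meet at one of the $O(r)$ branch vertices, so a bookkeeping argument over those finitely many shared pieces bounds the deficit by $r^3/2$ (the estimate quoted from~\cite{FPS}). The third step bounds the crossings contributed by the leftover edges $E(G)\setminus E(G')$: there are at least $m-\binom r2=\Omega(r^2)$ of them once $n\le 1.64r$, and each such edge must cross the dense skeleton $G'$ on the order of $r$ times — this can be made precise either by a direct charging argument or, in the style of the proof of \Cref{best-crossing-thm}, by a probabilistic-deletion estimate — yielding at least $r^3/2$ crossings disjoint from those counted in the second step.

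Adding the two contributions gives $\Cr(G)\ge\Cr(G')+r^3/2\ge\Cr(K_r)$, so $G$ is not a counterexample. The final thing to verify is that every quantity hidden in the ``$o(1)$'' is genuinely of lower order than the $\Theta(r^4)$ main terms; forcing all of these inequalities to hold simultaneously is what makes $r_\epsilon$ enormous (in~\cite{FPS}, $r>2^{70}$), which is exactly the limitation that \Cref{small-r-sec} of the present paper sets out to circumvent for moderate $r$.
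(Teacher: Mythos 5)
First, note that \Cref{FPS-thm} is not proved in this paper at all: it is quoted from~\cite{FPS}, and the paper only adapts pieces of that argument (\Cref{FPS-immersion-lem}, \Cref{immersion-cr-lem}, Appendix~A) to prove the weaker-range result \Cref{thm3}. So what you have written is a reconstruction of the FPS argument, and while its skeleton (weak immersion of $K_r$, a deficit bound of roughly $r^3/2$ for $\Cr(G')$, plus $r^3/2$ extra crossings among leftover edges) matches the outline given in the introduction, two of your three steps have genuine gaps. The first is the one you yourself flag: constructing the weak immersion. Your common-neighbor count $|N(u)\cap N(v)|\ge 2(r-1)-n$ only shows each pair has many length-2 routes \emph{a priori}; it does not control the degree budgets at the intermediate vertices, and a naive greedy routing can get stuck well before $n=1.64r$. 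The actual construction (Theorem~1.2(i) of~\cite{FPS}, stated here as \Cref{FPS-immersion-lem}) leans on Gallai's structural decomposition of $r$-critical graphs with fewer than $2r-1$ vertices (\Cref{gallai-join-lem}); nothing in your sketch replaces that structural input, and this is the step where the constant is really earned.

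The second gap is in your accounting of the ``other'' crossings. You count $m-\binom{r}{2}$ leftover edges, but the immersion $G'$ uses one edge per \emph{edge of each path}, not one per pair of branch vertices, so $|E(G')|$ can greatly exceed $\binom{r}{2}$ and your count of $E(G)\setminus E(G')$ is not justified. Moreover, the claim that each leftover edge ``must cross the dense skeleton $G'$ on the order of $r$ times'' is unsupported (and crossings between a leftover edge and $G'$ are not what makes $\Cr(G)\ge \Cr(G')+(\text{extra})$ additive). The correct mechanism, which is what \Cref{FPS-immersion-lem} is engineered for, is to fix \emph{in advance} a vertex set $W$ with $|W|=n-r$, choose the partition so that $G[W]$ induces at least its average number of edges, namely $\frac{(r-1)(n-r)(n-r-1)}{2(n-1)}$, obtain a weak immersion that provably avoids every edge of $G[W]$, and then apply the Crossing Lemma (\Cref{best-crossing-thm}(ii)) to $G[W]$ itself; additivity then follows because $G'$ and $G[W]$ are edge-disjoint by construction. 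Your hedge toward ``a probabilistic-deletion estimate'' points in the right direction, but without reserving a concrete edge set that the immersion is guaranteed not to touch, the two crossing counts cannot be added. Steps one and three therefore both need the Gallai-partition machinery that your proposal omits; step two (the $\Cr(G')\ge\Cr(K_r)-O(r^3)$ deficit bound) is essentially right and matches \Cref{immersion-cr-lem}.
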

Asymptotically, \Cref{FPS-thm} is very interesting.
But one weakness of this result is that the proof assumes that $r>2^{70}\approx 10^{21}$, and it does not
directly say anything when $r$ is smaller.
When $|G|\ge 1.228r$, \Cref{thm1b} improves \Cref{FPS-thm}, since (a) we 
require no lower bound on $r$ and (b) we weaken the upper bound in the hypothesis to $|G|\le 1.768r$.  
In this section,
we adapt the argument of \cite{FPS} to also exclude many possible orders of counterexamples when 
$r\ll 10^{21}$.
Because we are focused only on the lower end of this range, we avoid many of the complications in~\cite{FPS},
which makes our proof shorter and simpler.

Gallai~\cite{gallai2} proved the following result, which is crucial for describing $r$-critical graphs $G$
with $|G|\le 2r-2$.  For example, it plays essential roles in the proof of \Cref{Gallai-edge-lem} and
in the proof of \Cref{FPS-immersion-lem}, below.
\begin{lemA}[\cite{gallai2}]
	\label{gallai-join-lem}
	Fix positive integers $r$ and $n$ with $n\le 2r-2$.  If $G$ is an $r$-critical graph on $n$
	vertices, then there exists a partition
	$$V(G) = V_1\uplus V_2\uplus \cdots \uplus V_t,$$
	where $t\ge 2$, such that $V_i$ is complete to $V_j$ whenever $i\ne j$, and the induced subgraph
	$G[V_i]$ is $k_i$-critical with $|V_i| \ge 2k_i-1$ whenenver $i\in [t]$.
\end{lemA}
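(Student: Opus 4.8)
The plan is to separate the lemma into its real content — that the complement $\overline{G}$ is disconnected — and a short, routine wrap-up. So the first step I would take is to prove the \emph{decomposability} statement that is the heart of Gallai's theorem: if $H$ is $\ell$-critical with $|V(H)| \le 2\ell-2$, then $\overline{H}$ is disconnected. Granting this, the rest of the lemma falls out quickly. Apply it to $G$, with $\ell=r$, using $n\le 2r-2$: the components of $\overline{G}$ have vertex sets $V_1,\dots,V_t$ with $t\ge 2$, and by definition of the complement $V_i$ is complete to $V_j$ in $G$ whenever $i\ne j$, so $G$ is the join of $G[V_1],\dots,G[V_t]$. Since chromatic number is additive over joins, $\sum_i \chi(G[V_i]) = \chi(G) = r$; set $k_i:=\chi(G[V_i])$. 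Each $G[V_i]$ is then $k_i$-critical: for any proper subgraph $P\subsetneq G[V_i]$, replacing block $i$ of $G$ by $P$ gives a proper subgraph of $G$, which therefore has chromatic number $<r$, and by additivity this forces $\chi(P) < k_i$. Finally $\overline{G[V_i]} = \overline{G}[V_i]$ is a connected component of $\overline{G}$, hence connected; applying the decomposability statement in contrapositive to $G[V_i]$ forces $|V_i|\ge 2k_i-1$, which is exactly the last thing we need.

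So everything reduces to the decomposability statement, which I would prove by induction on $|V(H)|$, the cases $\ell\le 3$ being immediate. For the inductive step, assume $H$ is $\ell$-critical with $\overline{H}$ connected; since $\delta(H)\ge \ell-1$ we have $\Delta(\overline{H}) \le |V(H)|-\ell$, and the goal is to reach $|V(H)|\ge 2\ell-1$. Fix a vertex $v$ and, by criticality, a proper $(\ell-1)$-coloring $\varphi$ of $H-v$ with color classes $C_1,\dots,C_{\ell-1}$; since $H$ is not $(\ell-1)$-colorable, $v$ has a neighbor $u_i\in C_i$ for every $i$. The key observation is a Kempe-chain constraint: for all $i\ne j$ the vertices $u_i$ and $u_j$ lie in a single component of the bipartite graph $H[C_i\cup C_j]$, since otherwise swapping the two colors on the component containing $u_i$ would leave color $i$ unused at $v$, yielding an $(\ell-1)$-coloring of $H$. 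One then uses that $\overline{H}$ being connected creates enough non-edges of $H$ that no color class is complete to the rest of the graph and the classes cannot all be singletons, and combines this with the forced Kempe chains and the inductive hypothesis — applied to a smaller critical subgraph obtained from $H$ by identifying a well-chosen non-adjacent pair or deleting a well-chosen independent set — to push the vertex count up to $2\ell-1$.

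I expect the main obstacle to be exactly this quantitative part: turning the Kempe-chain and connectedness structure into the clean bound $|V(H)|\ge 2\ell-1$, and in particular handling the case where the smaller critical subgraph produced in the induction is itself decomposable, so that one must descend to its non-decomposable blocks and verify a size inequality for them before adding back the deleted vertices. Everything in the first paragraph — additivity of $\chi$ over joins, criticality of the blocks, and the contrapositive application of decomposability to each block — is by contrast entirely routine.
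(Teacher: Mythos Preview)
The paper does not prove this lemma at all; it is simply quoted from Gallai's original paper~\cite{gallai2} and used as a black box in \Cref{small-r-sec}. So there is no ``paper's own proof'' to compare against.

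That said, your plan is structurally sound. The reduction in your first paragraph --- from the single assertion ``$r$-critical with $n\le 2r-2$ implies $\overline{G}$ disconnected'' to the full partition statement with $k_i$-critical blocks satisfying $|V_i|\ge 2k_i-1$ --- is correct and cleanly argued: additivity of $\chi$ over joins, criticality of the join factors, and the contrapositive applied to each block all work exactly as you say.

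The gap is in the decomposability statement itself, and you have essentially flagged it yourself. Your second paragraph sets up the right objects (a critical coloring of $H-v$, the forced neighbors $u_i$ in each class, the Kempe-chain connectivity between $u_i$ and $u_j$), but the sentence beginning ``One then uses\ldots'' is not a proof: it does not specify \emph{which} non-adjacent pair to identify or \emph{which} independent set to delete, does not verify that the resulting graph is again critical of the right chromatic number, and does not carry out the arithmetic that recovers $|V(H)|\ge 2\ell-1$ from the inductive bound on the smaller graph. Gallai's actual argument (and the streamlined versions in Stiebitz's notes or in the survey literature) is more delicate here than your sketch suggests; in particular, the induction is not on a single smaller critical graph but typically passes through the structure of the low-vertex subgraph or through a careful analysis of how the color classes of $H-v$ interact with a spanning tree of $\overline{H}$. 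As written, your inductive step is a plausible outline but not yet an argument.
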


A \emph{weak immersion} of a graph $H$ in a
graph $G$ is an injective map $\varphi$ of vertices of $H$ to vertices of $G$, and of the edges $vw\in V(H)$ 
to edge-disjoint $\varphi(v),\varphi(w)$-paths in $G$.  Each vertex in $G$ that is the image of a vertex in 
$H$ is a \emph{branch vertex}.
So every subdivision is a weak immersion, but not vice versa, since
an internal vertex of some path in $G$ might be shared among multiple paths, or could even be a branch vertex.

To prove the main result of this section, we need 2 more lemmas from~\cite{FPS}.
\begin{lemA}[\cite{FPS}]
	\label{FPS-immersion-lem}
Let $G$ be an $r$-critical graph with $|G|<1.4r-0.6$.  
	Fix a partition $\cup_{i=1}^tV_i$ of $V(G)$ as in \Cref{gallai-join-lem}.
Fix an arbitrary partition $U_i\uplus W_i$ of each 
$V_i$, with $|U_i|=r_i$, and let $W:=\cup_{i=1}^tW_i$.  
Now $G$ contains a weak immersion $G'$ of $K_r$ that does not use any of the edges in $G[W]$.  
\end{lemA}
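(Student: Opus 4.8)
The plan is to build the weak immersion $G'$ of $K_r$ with branch set $U:=\bigcup_{i=1}^t U_i$. Since distinct parts of the Gallai partition are complete to one another, $G$ is the join $G[V_1]+\cdots+G[V_t]$, so $r=\chi(G)=\sum_i k_i$; as $r_i=k_i$, this gives $|U|=\sum_i r_i=r$, exactly the number of branch vertices of $K_r$, and the partitions $U_i\uplus W_i$ exist because $|V_i|\ge 2k_i-1\ge k_i$. Identifying $V(K_r)$ with $U$, it remains to route $\binom r2$ pairwise edge-disjoint paths, one joining each pair of vertices of $U$, none of them using an edge of $G[W]$.

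Most pairs are easy. If $x\in U_i$ and $y\in U_j$ with $i\ne j$, route $\{x,y\}$ along the edge $xy$, which exists by the join. If $x,y$ lie in the same $U_i$ and $xy\in E(G)$, route $\{x,y\}$ along that edge. All such edges are distinct and incident to $U$, so none lies in $G[W]$. The pairs that remain are the \emph{detour pairs}: pairs $\{u,u'\}\subseteq U_i$ with $uu'\notin E(G)$.

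For a detour pair $\{u,u'\}\subseteq U_i$, my plan is to use a path $u-w-v-w'-u'$ in which $w,w'\in W_i$ are neighbours (in $G[V_i]$) of $u$ and of $u'$ respectively and $v$ is any vertex of $V(G)\setminus V_i$, so that $wv,vw'\in E(G)$ automatically. Such $w,w'$ exist: if a branch vertex $u$ had no neighbour in $W_i$, then, as $\delta(G[V_i])\ge k_i-1$, its neighbourhood in $G[V_i]$ would be exactly $U_i\setminus\{u\}$, so $u$ would be adjacent to every other branch vertex of $U_i$ and would lie in no detour pair. Every edge of such a path is incident to $U$, hence outside $G[W]$; and none coincides with a direct edge above, since those join two vertices of $U$ while $uw,w'u'$ join $U$ to $W$ and $wv,vw'$ join $W$ to $V(G)\setminus V_i$. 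The task is thus to choose $w,w',v$ over all detour pairs so the paths are pairwise edge-disjoint, and this splits into two coordination problems. First, for each branch vertex $u$ the vertices of $W_i$ chosen at $u$ for its detour pairs must be distinct (else an edge $uw$ repeats); this is possible because the number of detour pairs at $u$ equals $(k_i-1)-|N_{G[V_i]}(u)\cap U_i|$, which is at most $\deg_{G[V_i]}(u)-|N_{G[V_i]}(u)\cap U_i|=|N_{G[V_i]}(u)\cap W_i|$, so a system of distinct representatives exists. Second, detour pairs that end up meeting at a common vertex of $W$ must get distinct relay vertices $v$ (else an edge $wv$ repeats): viewing each detour pair as an edge of an auxiliary multigraph on $W$ joining the (at most two) vertices of $W$ it uses, and the relay vertices as colours, this is a bounded-degree edge-colouring, since by the first step at most $|N_{G[V_i]}(w)\cap U_i|\le k_i$ part-$i$ detour pairs meet any one $w\in W_i$; the colours for a part-$i$ pair are drawn from $V(G)\setminus V_i$. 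A handful of degenerate configurations (notably $w=w'$ forced, in which case $\{u,u'\}$ is the unique detour pair at both endpoints and the length-$2$ path $u-w-u'$ works) are handled separately. \textbf{I expect this global edge-disjoint routing --- the edge-colouring together with the bookkeeping that keeps relay edges of different parts apart --- to be the main obstacle}, though the counts leave room.

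Room for the palette is exactly what the hypothesis $|G|<1.4r-0.6$ provides. By \Cref{gallai-join-lem}, $|V_j|\ge 2k_j-1\ge k_j$ for every $j$, so $n=|V_i|+\sum_{j\ne i}|V_j|\ge(2k_i-1)+(r-k_i)=r+k_i-1$, giving $k_i\le n-r+1<0.4r+0.4$. Hence $|V_i|=n-\sum_{j\ne i}|V_j|\le n-(r-k_i)$, so $|V(G)\setminus V_i|=n-|V_i|\ge r-k_i$, which (as $k_i<0.4r+0.4$) exceeds $k_i$ by a constant factor once $r$ is not small --- enough colours for every part. The same hypothesis also forces $n\le 2r-2$, without which \Cref{gallai-join-lem} would not apply at all. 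Carrying out the coordination above, the direct edges and the detour paths together form a weak immersion of $K_r$ with branch set $U$ that uses no edge of $G[W]$.
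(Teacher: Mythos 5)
First, a point of reference: the paper does not prove \Cref{FPS-immersion-lem} at all --- it is quoted from~\cite{FPS} (``essentially Theorem~1.2(i)'' there), and only \Cref{immersion-cr-lem} is reproved in Appendix~A. So your proposal is an original reconstruction, and I am judging it on its own terms. The architecture (branch set $U$, direct edges across parts and for adjacent pairs inside a part, length-$4$ detours through $W_i$ and a relay vertex for non-adjacent pairs in $U_i$) is sensible, and the counts at each branch vertex and the degree bound $\le k_i$ on the auxiliary multigraph are correct. But there are two genuine problems.

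The first is an outright error: you take the relay vertex $v$ from $V(G)\setminus V_i$ and then assert that ``every edge of such a path is incident to $U$.'' That is false --- $V(G)\setminus V_i$ contains $W_j$ for $j\ne i$, and if $v\in W_j$ then the relay edges $wv$ and $vw'$ have both endpoints in $W$, i.e., they are exactly the edges of $G[W]$ the lemma forbids. The fix is to restrict the palette to $U\setminus U_i$; fortunately $|U\setminus U_i|=r-k_i$, which equals the lower bound you derived for $|V(G)\setminus V_i|$, so nothing else changes --- but as written the constructed $G'$ can use edges of $G[W]$.

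The second is the step you yourself flag as the main obstacle, and the counts do \emph{not} ``leave room'' in the way you suggest. You must properly edge-colour a loopless multigraph of maximum degree $\Delta\le k_i$ with $r-k_i$ colours, where the hypothesis gives only $k_i\le n-r+1<0.4r+0.4$. A greedy colouring needs up to $2\Delta-1$ colours, and $2k_i-1\le r-k_i$ requires $k_i\le(r+1)/3$, which is weaker than what you have; so greedy fails. Shannon's theorem ($\chi'\le\lfloor 3\Delta/2\rfloor$ for multigraphs) gives $\lfloor 3k_i/2\rfloor\le r-k_i$ exactly when $k_i\le 2r/5$, which is precisely what $n<1.4r-0.6$ delivers --- with \emph{zero} slack (e.g., $r=17$, $n=23$, $k_i=7$ forces $\lfloor 21/2\rfloor=10=r-k_i$). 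So the argument can be completed, but only by invoking Shannon's bound and checking the integer arithmetic at the boundary; neither is done, and this is where the constant $1.4$ actually earns its keep. (Separately, your degenerate case is handled more simply than you suggest: if a pair's two representatives coincide at $w$, just use the $2$-path $u$--$w$--$u'$; its two edges are the designated first edges of that pair, so injectivity at $u$ and $u'$ already gives disjointness, and no claim that the pair is ``the unique detour pair at both endpoints'' is needed or, as far as I can see, true.)
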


\begin{lemA}[\cite{FPS}]
	\label{immersion-cr-lem}
	Fix a positive integer $r$, let $G'$ be a graph, and let $n:=|G'|$.  
	If $G'$ is a weak immersion of $K_r$, then $\Cr(G')\ge \Cr(K_r)-n(n-r)(n+2r)/8$.
\end{lemA}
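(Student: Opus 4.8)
The plan is to take an optimal drawing $D$ of $G'$ and build from it a drawing of $K_r$, tracking how many crossings we can be forced to create along the way. Write $\varphi$ for the weak immersion, let $B=\varphi(V(K_r))$ be the set of $r$ branch vertices, and for each edge $uv$ of $K_r$ let $P_{uv}$ be the corresponding $\varphi(u),\varphi(v)$-path in $G'$. The paths $P_{uv}$ are pairwise edge-disjoint, so each edge of $G'$ lies on at most one of them; deleting the edges that lie on none, the graph $G'':=\bigcup_{uv}P_{uv}$, with its inherited drawing $D''$ (which has at most $\Cr(G')$ crossings), is exactly the union of the $P_{uv}$. In $D''$ each branch vertex is an endpoint of exactly $r-1$ of the paths, while every other vertex is an internal vertex of at least one path. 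For a vertex $w$, write $p_w$ for the number of paths $P_{uv}$ having $w$ as an internal vertex; then $2p_w\le\deg_{G''}(w)$ when $w\notin B$, and $(r-1)+2p_w\le\deg_{G''}(w)$ when $w\in B$.

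Next I would ``split'' every vertex so that the paths become internally disjoint, making $G''$ a subdivision of $K_r$. Fix pairwise-disjoint tiny disks $B_w$, one around each vertex $w$, small enough to contain no crossing of $D''$; inside $B_w$ the edges at $w$ meet $\partial B_w$ in a definite cyclic order and run to $w$ like radii, the $p_w$ through-paths pair $2p_w$ of these radii into $p_w$ chords of $B_w$, and the remaining $r-1$ radii (present only when $w\in B$) belong to paths that end at $w$. Replace each chord by an arc that misses $w$, with a fresh degree-$2$ vertex on it; routing these arcs carefully (each hugging $\partial B_w$ on the side of $w$ carrying fewer of the $r-1$ ``ending'' radii) creates at most $\binom{p_w}{2}$ new crossings inside $B_w$ when $w\notin B$, and at most $\binom{p_w}{2}+p_w\lfloor(r-1)/2\rfloor$ when $w\in B$. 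Doing this at every vertex converts $D''$ into a drawing of a subdivision $\widehat{G}$ of $K_r$ with at most $\Cr(G')+\sum_{w\notin B}\binom{p_w}{2}+\sum_{w\in B}\bigl(\binom{p_w}{2}+p_w\lfloor(r-1)/2\rfloor\bigr)$ crossings. Suppressing the degree-$2$ vertices of $\widehat{G}$ does not increase the crossing count, and afterwards any self-crossing of an edge, or any crossing between two edges of $K_r$ sharing an endpoint, can be eliminated by the standard swap that strictly decreases the total; what remains is a genuine drawing of $K_r$, so
\[
\Cr(K_r)\le\Cr(G')+\sum_{w\notin B}\binom{p_w}{2}+\sum_{w\in B}\Bigl(\binom{p_w}{2}+p_w\bigl\lfloor\tfrac{r-1}{2}\bigr\rfloor\Bigr).
\]

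It then remains to bound this error term by $n(n-r)(n+2r)/8$. Since $\deg_{G''}(w)\le n-1$, we get $p_w\le(n-1)/2$ for $w\notin B$ and the sharper $p_w\le(n-r)/2$ for $w\in B$; moreover, by the edge-disjointness of the paths, $\sum_w p_w=|E(G'')|-\binom{r}{2}\le\binom{n}{2}-\binom{r}{2}=\frac{(n-r)(n+r-1)}{2}$, and summing $(r-1)+2p_w\le\deg_{G''}(w)$ over $w\in B$ gives $\sum_{w\in B}p_w\le\frac{r(n-r)}{2}$. Bounding $\binom{p_w}{2}\le\tfrac12 p_w^2$ and then $p_w^2\le\tfrac{n-1}{2}p_w$ or $p_w^2\le\tfrac{n-r}{2}p_w$ according to whether $w\notin B$ or $w\in B$, and finally feeding in the two sum bounds just recorded, the error term collapses to $\frac{n-r}{8}\bigl[(n-3)(n+r-1)+r(r-1)\bigr]$, which is at most $\frac{n(n-r)(n+2r)}{8}$ since $n\ge r$. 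The topological surgery is routine in this area; the step I expect to demand the most care is this final estimate, because the crude choice $p_w\le(n-1)/2$ used everywhere is too lossy — one must use $p_w\le(n-r)/2$ at the branch vertices, which is precisely where the extra $p_w\lfloor(r-1)/2\rfloor$ crossings live, and balance it against the edge-count bound in order to land exactly on $n(n-r)(n+2r)/8$.
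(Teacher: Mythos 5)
Your proof is correct and takes essentially the same route as the paper's: reroute each through-path around each vertex on the cheaper side, charging at most one new crossing per pair of through-paths at a vertex and at most $\lfloor(r-1)/2\rfloor$ crossings with the $r-1$ paths ending at a branch vertex. The only difference is bookkeeping — the paper reaches $n(n-r)(n+2r)/8$ directly via the worst-case-per-vertex count $r\cdot\frac{n-r}{2}\cdot\frac{n}{2}+(n-r)\cdot\frac{n^2}{8}$, while your edge-count accounting yields a slightly sharper intermediate expression that you then relax to the same bound.
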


\Cref{FPS-immersion-lem} is essentially Theorem~1.2(i) in~\cite{FPS}.
And \Cref{immersion-cr-lem} is not stated explicitly in~\cite{FPS}, but its proof is essentially the proof of 
Lemma~4.3~in that paper.  (For completeness, we include a proof of \Cref{immersion-cr-lem} in 
Appendix~\hyperref[appendix-A]{A}.)
By adapting arguments in~\cite{FPS}, we get that $\Cr(G')\ge \Cr(K_r)-n(n-r)(n+2r)/8$, when $G'$ is a weak 
immersion of $K_r$.  We then use \Cref{best-crossing-thm}(ii) to show that
$\Cr(G[W])\ge \Cr(K_r)-\Cr(G')$; thus, we have $\Cr(G)\ge \Cr(G')+\Cr(G[W]) \ge \Cr(K_r)$.
We can now prove the main result of this section.

\begin{thm}
	\label{thm3}
	Fix a positive integer $r$, and let $G$ be an $r$-critical graph with $n$ vertices where $n\le 1.23r$.
	If
	\begin{align}	
		27.48 &\le \frac{(r-1)^3(n-r-1)^3}{(n-1)^3n(n+2r)}, \label{finalish-ineq}
	\end{align}
	then $G$ is a not a counterexample to Albertson's Conjecture.
	In particular, this is true if $r\ge 125{,}000$ and $1.10r\le n \le 1.23r$; and it is true if $r\ge 825{,}000$ and $1.05r\le n\le 1.23r$.
\end{thm}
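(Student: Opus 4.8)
The strategy is to combine the two lemmas imported from~\cite{FPS} with the Crossing Lemma in the form of \Cref{best-crossing-thm}(ii), exactly along the lines sketched just before the statement. First I would invoke \Cref{gallai-join-lem} to obtain the join partition $V(G)=V_1\uplus\cdots\uplus V_t$; this applies since $n\le 1.23r< 2r-2$ for all $r\ge 4$ (the small cases $r<4$ being trivial). For each $i$ choose $U_i$ of size $r_i$ (where $G[V_i]$ is $r_i$-critical), set $W_i:=V_i\setminus U_i$ and $W:=\cup W_i$; then $\sum r_i\ge r$ and $|W| = n-\sum r_i$. Since $n\le 1.23r<1.4r-0.6$ for $r$ not tiny, \Cref{FPS-immersion-lem} produces a weak immersion $G'$ of $K_r$ in $G$ using no edge of $G[W]$, and \Cref{immersion-cr-lem} gives $\Cr(G')\ge \Cr(K_r)-n(n-r)(n+2r)/8$. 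Since $G'$ and $G[W]$ are edge-disjoint subgraphs of $G$, a fixed optimal drawing of $G$ restricts to drawings of both, so $\Cr(G)\ge \Cr(G')+\Cr(G[W])$. Hence it suffices to prove $\Cr(G[W])\ge n(n-r)(n+2r)/8$.

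Next I would bound $\Cr(G[W])$ from below using \Cref{best-crossing-thm}(ii). The key point is that $G[W]$ is dense: every vertex of $W_i$ is complete to all of $V_j$ for $j\ne i$ and to $U_i\subseteq V_i$, so each vertex of $W$ has degree at least $(n-|W_i|)\ge$ something like $n-|W|$ inside... more carefully, a vertex $v\in W_i$ is adjacent in $G[W]$ to every vertex of $W_j$ for $j\ne i$, hence $\deg_{G[W]}(v)\ge |W|-|W_i|$; combined with the critical-graph degree bound $\delta(G)\ge r-1$ and the fact that $v$ has at most $|U_i|+ (n-|V_i|)$ neighbours outside $W$, one gets $\deg_{G[W]}(v)\ge (r-1)-|U_i|-(n-|V_i|) = (r-1) - r_i - n + |V_i|\ge (r-1)-n+|V_i|-r_i$, and since $|V_i|\ge 2r_i-1$ this is $\ge (r-1)-n+r_i-1 \ge r - n + \sum_j r_j - (\text{stuff})$... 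The clean statement I expect is $|W|\le n-r$ and each vertex of $W$ has $G[W]$-degree at least $2|W|-(n-r+1)$ or similar, yielding $|E(G[W])|\ge |W|(|W|-(n-r-1))$, and then $m_W\ge 6.95\,|W|$ holds in the regime $n\le 1.23r$, so $\Cr(G[W])\ge m_W^3/(27.48\,|W|^2)\ge (|W|-(n-r-1))^3/27.48$. Writing $n_W:=|W|$ and noting $n_W\ge$ (something forcing $n_W-(n-r-1)\ge$ a multiple of $r-1$), the inequality to beat becomes $\bigl((r-1)(n-r-1)/(n-1)\bigr)^3\big/27.48 \ge n(n-r)(n+2r)/8$ after the appropriate substitutions, which rearranges to exactly~\eqref{finalish-ineq}.

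The main obstacle, and the step requiring the most care, is getting the edge/degree bound on $G[W]$ sharp enough that the cube on the left of~\eqref{finalish-ineq} really is $(r-1)^3(n-r-1)^3/(n-1)^3$ and not something weaker; this is where the interplay between $\delta(G)\ge r-1$, the $|V_i|\ge 2r_i-1$ bound, and the freedom in choosing $|U_i|=r_i$ must be exploited exactly as in~\cite{FPS}. One must also check the side condition $m_W\ge 6.95\,n_W$ needed to apply \Cref{best-crossing-thm}(ii) in the full range $1.05r\le n\le 1.23r$, and handle degenerate cases (e.g. $W=\emptyset$, where the claim is immediate since then $\Cr(G)\ge \Cr(G')\ge \Cr(K_r)$ because $n(n-r)(n+2r)/8=0$).

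\textbf{The numerical corollaries.} Once~\eqref{finalish-ineq} is established as sufficient, the two concrete consequences follow by elementary calculus. Fix $\beta:=n/r\in[1.05,1.23]$ (resp. $[1.10,1.23]$) and rewrite the right-hand side of~\eqref{finalish-ineq} as
\begin{align*}
g(r,\beta):=\frac{(r-1)^3(\beta r-r-1)^3}{(\beta r-1)^3\,\beta r\,(\beta+2)r}
= \frac{(1-\tfrac1r)^3(\beta-1-\tfrac1r)^3}{(\beta-\tfrac1r)^3\,\beta(\beta+2)}.
\end{align*}
For each fixed $\beta$ this is increasing in $r$ (the factors $(1-1/r)$, $(\beta-1-1/r)$ increase and $(\beta-1/r)$ decreases), and as $r\to\infty$ it tends to $(\beta-1)^3/(\beta(\beta+2))$. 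I would check that $\inf_{\beta\in[1.10,1.23]}(\beta-1)^3/(\beta(\beta+2))$ is attained at the left endpoint $\beta=1.10$, where it equals $0.001/(1.10\cdot 3.10)\approx 2.93\times10^{-4}$ — wait, that is far below $27.48$, so the bound must instead compare against the \emph{upper} bound $\Cr(K_r)\le\frac1{64}r(r-1)(r-2)(r-3)$ with a constant adjustment; concretely \eqref{finalish-ineq} as written already incorporates the factor $27.48$ from the Crossing Lemma and the $8$ from \Cref{immersion-cr-lem}, so what one actually verifies is that $g(r,\beta)$, as a function increasing in $r$, first exceeds $27.48$ at $r=125{,}000$ when $\beta=1.10$ and at $r=825{,}000$ when $\beta=1.05$, and then monotonicity in $r$ plus the fact that the worst $\beta$ in each interval is the smallest one (since $(\beta-1)^3/(\beta(\beta+2))$ is increasing in $\beta$ on $[1,\infty)$) closes both cases. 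This last monotonicity-in-$\beta$ check is the one routine computation I would actually carry out in full.
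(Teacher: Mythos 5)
Your overall skeleton matches the paper's: Gallai's join partition, \Cref{FPS-immersion-lem} to get a weak immersion $G'$ of $K_r$ avoiding $G[W]$, \Cref{immersion-cr-lem} for $\Cr(G')\ge \Cr(K_r)-n(n-r)(n+2r)/8$, edge-disjointness giving $\Cr(G)\ge\Cr(G')+\Cr(G[W])$, and then \Cref{best-crossing-thm}(ii) applied to $G[W]$. But the step you yourself flag as ``the main obstacle'' --- producing a lower bound on $|E(G[W])|$ strong enough to yield the cube $(r-1)^3(n-r-1)^3/(n-1)^3$ --- is a genuine gap, and the degree-based route you sketch does not close it. For an \emph{arbitrary} choice of the partitions $U_i\uplus W_i$, the minimum degree of $G[W]$ can be essentially zero: if a single part $V_i$ carries all of the excess $n-r$, then $W=W_i\subseteq V_i$ and the join structure contributes no edges inside $W$ at all, while the bound $\delta(G)\ge r-1$ is consumed entirely by neighbours in $\bigcup_j U_j$. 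Your own intermediate inequalities reflect this (they bottom out at expressions like $(r-1)-r_i-(n-|V_i|)$, which need not be positive), and your proposed target $|E(G[W])|\ge |W|\bigl(|W|-(n-r-1)\bigr)=(n-r)\cdot 1$ is far too weak. The paper's key move, which you are missing, is an \emph{averaging argument over the choice of partition}: since $m\ge n(r-1)/2$ and a uniformly random admissible partition puts both endpoints of a given edge into $W$ with probability $\binom{n-r}{2}/\binom{n}{2}$, some partition satisfies $|E(G[W])|\ge \frac{(r-1)(n-r)(n-r-1)}{2(n-1)}$; the $(n-1)$ in the denominator of \eqref{finalish-ineq} comes precisely from this ratio of binomial coefficients. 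One then fixes that partition \emph{before} invoking \Cref{FPS-immersion-lem} (which permits any partition), and substitutes into $|E(G[W])|^3/(27.48|W|^2)\ge n(n-r)(n+2r)/8$ with $|W|=n-r$ to get \eqref{finalish-ineq}. The side condition $|E(G[W])|\ge 6.95|W|$, which you mention but do not verify, reduces to $(r-1)(n-r-1)/(2(n-1))\ge 6.95$ and follows from \eqref{finalish-ineq} itself (which forces $n-r-1\ge 27.48$ and $(n-1)/(r-1)\le 1.5$).

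Your treatment of the two numerical corollaries also contains an algebra slip that derails it: when you rewrite the right side of \eqref{finalish-ineq} in terms of $\beta=n/r$, the numerator is of order $r^6$ and the denominator of order $r^5$, so the expression is $\approx \frac{(\beta-1)^3}{\beta^4(\beta+2)}\,r$ --- it grows linearly in $r$ --- whereas your displayed $g(r,\beta)$ has dropped this factor of $r$ (and also the exponent on $\beta$), which is why you momentarily conclude the quantity is ``far below $27.48$.'' The paper's verification is the corrected version of what you intend: discard the $-1$ terms at the cost of a factor $(1-1/12500)^{-6}$, note that $\frac{(\alpha-1)^3}{\alpha^4(\alpha+2)}r$ is minimized over $\alpha\in(a,1.23]$ at $\alpha=a$, and check $0.1^3\cdot 125000/(1.1^4\cdot 3.1)>27.5$ and $0.05^3\cdot 825000/(1.05^4\cdot 3.05)>27.5$.
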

\begin{proof}
	To begin we show that the second and third statements follow from the first.  To simplify our calculations, we want to drop each additive $-1$ term, particularly in the numerator.  For both the second and third statement, $\min\{r,n-r\}\ge 12{,}500$.  So it suffices to show that $27.48\times (1-1/12{,}500)^{-6}\le 27.50\le r^3(n-r)^3/(n^4(n+2r))$.  Letting $\alpha:=n/r$, we rewrite this final expression as
	$$
	\frac{r^3(\alpha-1)^3r^3}{\alpha^4r^4(\alpha+2)r} = \frac{(\alpha-1)^3}{\alpha^4(\alpha+2)}r.
	$$
	It is easy to check that over any interval $(a,1.23]$, with $a>1$, this function of $\alpha$ is 
	minimized at $a$.  So, for the second statement, we need only verify that 
	$0.1^3\times 125{,}000/(1.1^4\times 3.1) > 27.5$; and this is true.  
	For the third statement, we check that
	$0.05^3\times 825{,}000/(1.05^4\times 3.05) > 27.5$; and again this is true.  

Now we prove the first statement.  Since $G$ is $r$-critical, $\delta(G)\ge r-1$, so $m\ge n(r-1)/2$.
	Consider the partition $V(G)=\cup_{i=1}^tV_i$ guaranteed by \Cref{gallai-join-lem}, and a
	partition $V_i=U_i\uplus W_i$ as in \Cref{FPS-immersion-lem}.
	Since $|W|=n-r$, the average value of $|E(G[W])|$, over all partitions, is at least 
	$\frac{n(r-1)}2\frac{{{n-r}\choose 2}}{{n\choose 2}} = \frac{(r-1)(n-r)(n-r-1)}{2(n-1)}$.  
	So some parition exists with $W$ inducing at least this many edges; fix such a partition.  
	By \Cref{FPS-immersion-lem}, we know that $G$ contains a weak immersion $G'$ of $K_r$ that does not 
	use any edge of $G[W]$.  And by \Cref{immersion-cr-lem}, we have $\Cr(G')\ge 
	\Cr(K_r)-n(n-r)(n+2r)/8$.  So it suffices to show that $\Cr(G[W])\ge n(n-r)(n+2r)/8$.  For this 
	we use \Cref{best-crossing-thm}(ii).  
	So we need $|E(G[W])|^3/(27.48|W|^2)\ge n(n-r)(n+2r)/8$.
	Recall that $|W|=n-r$.  Now substituting and simplifying gives \eqref{finalish-ineq}, as desired.
	Finally, we much check that \Cref{best-crossing-thm}(ii) does in fact apply. That is, we must verify 
	that 
	\begin{align}
		\label{final-ineq}
		\frac{(r-1)(n-r-1)}{2(n-1)}\ge 6.95.
	\end{align}
	Note that $\max\{r-1,n-r-1\}=r-1\le n-1=\min\{n-1,n,n+2r\}$.  Thus, the hypothesis implies that
	$n-r-1\ge 27.48$.  As a result, we have $(n-1)/(r-1) \le 1.5$.  So \Cref{final-ineq} is implied
	by $n-r-1\ge 21 \ge 2\times1.5\times 6.95$.  And this is true, as noted just above.
\end{proof}
\vspace{-0.1in}%

\phantomsection%
\label{appendix-A}%
\section*{Appendix A: Proof of \texorpdfstring{\Cref{immersion-cr-lem}}{Lemma~I}}

The following lemma and proof are slight variations on ones in~\cite{FPS}; we only include them
here for completeness.

\addtocounter{thmA}{-1}
\begin{lemA}[\cite{FPS}]
	Fix a positive integer $r$, let $G'$ be a graph, and let $n:=|G'|$.  
	If $G'$ is a weak immersion of $K_r$, then $\Cr(G')\ge \Cr(K_r)-n(n-r)(n+2r)/8$.
\end{lemA}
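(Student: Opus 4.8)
The plan is to follow the redrawing argument of Fox, Pach, and Suk (their Lemma~4.3, which is the source of the statement). Fix a drawing $D$ of $G'$ with exactly $\Cr(G')$ crossings. Since deleting edges cannot increase the crossing number and does not change $n=|G'|$, we may assume that every edge of $G'$ lies on one of the edge-disjoint paths $P_{uv}:=\varphi(uv)$ realizing the weak immersion. From $D$ we build a drawing $D^\ast$ of $K_r$: place each vertex $u$ of $K_r$ at the point $\varphi(u)$, and draw the edge $uv$ of $K_r$ along the curve traced by $P_{uv}$ in $D$. Because $\Cr(K_r)$ is the minimum over all drawings, and because any drawing can be cleaned up (deleting self-crossings of a single edge-curve by shortcutting, and perturbing away tangencies and points where three or more curves meet) without creating new crossings, it suffices to bound the number of crossings of $D^\ast$; then $\Cr(K_r)$ is at most that number, and rearranging gives the claimed inequality.

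Next I would classify the crossings of $D^\ast$. Each is either (i) a genuine crossing of $D$ between two edges of $G'$ lying on two \emph{distinct} paths, or (ii) an artifact introduced near a vertex $w$ of $G'$: either two or more path-curves pass through $w$, or $w=\varphi(u)$ is a branch vertex and at least one path-curve passes through it (so that curve must be rerouted off the vertex $\varphi(u)$ of $K_r$, potentially crossing some of the $r-1$ spoke edge-ends at $\varphi(u)$). Type (i) crossings number at most $\Cr(G')$, since distinct paths are edge-disjoint and a crossing of $D$ between two edges of the \emph{same} path becomes a self-crossing of one curve of $D^\ast$, removable for free. For type (ii), let $p(w)$ be the number of paths having $w$ as an internal vertex; since the paths are edge-disjoint, a through-path uses two edges at $w$, and every path-endpoint is a branch image, we get $2p(w)\le d_{G'}(w)\le n-1$, with $2p(w)\le d_{G'}(w)-(r-1)$ when $w$ is a branch vertex. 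Rerouting these $p(w)$ curves as nearly-boundary arcs of a small disk about $w$ creates at most $\binom{p(w)}{2}$ crossings among themselves, and when $w=\varphi(u)$ is a branch vertex at most $\lfloor (r-1)/2\rfloor$ crossings of each rerouted curve with the spokes (route each along the side of the disk containing at most half of the spoke-ends). In particular a vertex through which at most one path passes — and which, if a branch vertex, has no through-path — contributes nothing.

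The heart of the argument, and the step I expect to be the main obstacle, is showing that the total artifact count is at most $n(n-r)(n+2r)/8$. The naive per-vertex bounds above are individually too weak: there are only $n-r$ non-branch vertices, each billable at $\binom{p(w)}{2}<(n-1)^2/8<n^2/8$, which accounts comfortably for the $\tfrac18 n^2(n-r)$ part of the target; but the spoke interactions at branch vertices must be absorbed by the remaining $\tfrac14 nr(n-r)$, and a crude estimate $\tfrac{r-1}{2}\sum_{\text{branch }u}p(\varphi(u))\le \tfrac{r-1}{2}\bigl(|E(G')|-\binom r2\bigr)$ is not obviously small enough when $n$ is close to $r$. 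The fix, as in \cite{FPS}, is to exploit the trade-off that a path using edges between branch vertices (so short, but passing through a branch vertex) ties up exactly those edges and thereby forces \emph{other} paths to be longer, so one cannot simultaneously have many long paths through branch vertices; one organizes the charging accordingly. I would also record the transparent base cases: $n=r$, where $G'$ is $K_r$ itself (after suppressing degree-$2$ vertices on length-$1$ paths, trivially) and the right side is $0$; and small $n-r$, where every path has small excess length and the bound is easy. With the charging set up, expanding and simplifying $\sum_w\binom{p(w)}{2}$ and the spoke terms against $n(n-r)(n+2r)/8$ completes the proof.
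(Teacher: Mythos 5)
Your overall strategy is the paper's: trace the immersion paths in an optimal drawing of $G'$, reroute each path off every vertex it merely passes through, and bound the newly created crossings separately near the $n-r$ non-branch vertices and near the $r$ branch vertices. Your non-branch accounting ($\binom{p(w)}{2}$ per vertex with $p(w)\le (n-1)/2$, totalling at most $(n-r)n^2/8$) is exactly right. The gap is at the step you yourself flag as ``the main obstacle'': you conclude that the naive per-vertex bounds at branch vertices cannot absorb the spoke interactions into the remaining budget $rn(n-r)/4$, and you defer to a trade-off/charging argument that you do not carry out. That conclusion is mistaken, and no such argument is needed. The error is that you abandon the per-vertex inequality $2p(w)\le d_{G'}(w)-(r-1)$ (which you correctly state) in favor of the global estimate $\sum_{\text{branch }u}p(\varphi(u))\le |E(G')|-\binom{r}{2}$, which discards the degree constraint and is indeed too weak.

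Keeping the per-vertex bound, each branch vertex $w$ satisfies $p(w)\le (n-1-(r-1))/2=(n-r)/2$, hence contributes at most
$\binom{p(w)}{2}+p(w)\cdot\frac{r-1}{2}=\frac{p(w)(p(w)+r-2)}{2}\le\frac{(n-r)(n+r-4)}{8}\le\frac{n(n-r)}{4}$
new crossings (using $r\le n$). Summing over the $r$ branch vertices gives at most $rn(n-r)/4$, and adding the non-branch total $(n-r)n^2/8$ yields exactly $n(n-r)(n+2r)/8$. This is precisely how the paper proceeds, even more crudely: at most $(n-r)/2$ reroutings per branch vertex, each creating at most $(d(w)-2)/2<n/2$ crossings with the edge-ends at $w$. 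So your proof is one elementary substitution away from complete; no analysis of path lengths or edge budgets, and no base cases, are required.
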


The basic idea of the proof is to take a plane embedding of a weak immersion of $K_r$, 
modify it to get a plane embedding of a subdivision of $K_r$, and then prove an upper bound 
(which is $n(n-r)(n+2r)/8$) on the number of crossings that were created during this modification process.

\begin{proof}
	Consider a graph $G'$ that is a weak immersion of $K_r$; let $v_1,\ldots, v_r$ be the
	branch vertices, and let $P_{ij}$ be the path in $G'$ connecting $v_i$ and $v_j$.
	Fix a drawing $\tilde{G}$ of $G'$ in the plane with $\Cr(G')$ crossings.  We will transform 
	$\tilde{G}$ into a plane embedding $\widehat{G}$ that contains a subdivision of $K_r$.

	Find a place where a path $P_{ij}$ passes through a vertex $v_h$ and redraw a small neighborhood
	around $v_h$ so that instead of $P_{ij}$ passing through $v_h$, the path $P_{ij}$ goes around
	$v_h$, either clockwise or counterclockwise, whichever creates fewer new crossings.  We repeat
	this step of finding and rerouting a path $P_{ij}$ that passes through a vertex $v_h$ until we 
	arrive at a plane embedding of a graph $\widehat{G}$ that contains a subdivision of $K_r$.

	Now we count crossings in $\widehat{G}$.  We group them into two types:
	type 1 crossings, which already appeared in $\tilde{G}$; and type 2 crossings, which were
	created in a redrawing steps.  We will show that $\widehat{G}$ has at most $n(n-r)(n+2r)/8$
	type 2 crossings.
	We further split the type 2 crossings into ones occurring in a small neighborhood around a branch
	vertex, and ones occurring in a small neighborhood around a~non-branch~vertex.

	Since each branch vertex $v_h$ is the endpoint of $r-1$ paths to other branch vertices, the number
	of times that $v_h$ appears as an internal vertex of some path $P_{ij}$ is 
	at most $(d(v_h)-(r-1))/2\le (n-r)/2$.  Each time that such a path $P_{ij}$ is rerouted from 
	passing through $v_h$, this rerouting creates at most $(d(v_h)-2)/2<n/2$ crossings.  So the total
	number of type 2 crossings in $\widehat{G}$ that are close to branch vertices is at most $r(n-r)n/4$.

	Now consider the type 2 crossings near non-branch vertices.  At each of the $n-r$ non-branch vertices
	$v_h$, we have at most $(n-1)/2$ paths $P_{ij}$ passing through $v_h$.  And when each of these paths
	$P_{ij}$ is rerouted around $v_h$ (either clockwise or counterclockwise), in the worst case each
	pair of these paths creates at most $1$ new crossing.  So the total number of these crossings is at 
	most $(n-r){(n-1)/2\choose 2}\le (n-r)((n-1)/2)^2/2\le (n-r)n^2/8$.  Thus, the total number 
	of (type 2) crossings that we create when transforming $\tilde{G}$ to $\widehat{G}$ is at most
	$r(n-r)n/4+(n-r)n^2/8 = n(n-r)(n+2r)/8$, as claimed.
\end{proof}
\vspace{-.22in}~

\phantomsection%
\label{appendix-B}%
\section*{Appendix B: Python Code to Generate \texorpdfstring{\Cref{table1}}{Table 1}}
\vspace{-.215in}~

\lstset{language=python, basicstyle=\small}
\begin{lstlisting} 
from sympy import *; n, r= symbols('n r')
p_array = [.75, .72, .68, .65, .62, .60, .58, .56, .54, .52, .50, .50]

def round_interval(lower_upper): 
    # round lower and upper endpoints up and down (unless upper is oo, i.e., infinity) 
    # lower_upper always string of form '(a <= n) & (n <= b)' or '(a <= n) & (n < oo)'
    lower = "["+str(ceiling(float(lower_upper.split("&")[0].split("<")[0].strip("( "))))
    upper_tmp = lower_upper.split("&")[1].split("<")[1].strip("=) ")
    if (upper_tmp == "oo"): upper = upper_tmp+")" 
    else: upper = str(floor(float(upper_tmp)))+"]"
    return lower+","+upper;

print("n    K_r  Lem C  Ineq 2   Thm 4  Ineq 3  Ineq 4   Lem 6  prob");

for r in range(15,27):         
    # target below is a known upper bound on cr(K_r)  
    target = floor(r/2)*floor((r-1)/2)*floor((r-2)/2)*floor((r-3)/2)/4
    targ4  = str(target).rjust(4); # add leading space if target is only 3 digits
    lemC   = "["+str(r)+","+str(r+4)+"]";
    thm4   = "["+str(ceiling(1.228*r))+","+str(floor(1.768*r))+"]"

    # ineq2 uses k=12, with Gallai's edge bound
    ineq2  = solve(5.0*((r-1)*n+(n-r)*(2*r-n)-2)/2*(n-2)*(n-3)/((12-2)*(12-3))
             -203*n*(n-1)*(n-2)*(n-3)/(9*12*(12-1)*(12-3))- target>=0,n)

    # ineq3 repeats ineq2, but with k=22, (still with Gallai's edge bound)
    ineq3  = solve(5.0*((r-1)*n+(n-r)*(2*r-n)-2)/2*(n-2)*(n-3)/((22-2)*(22-3))
             -203*n*(n-1)*(n-2)*(n-3)/(9*22*(22-1)*(22-3))- target>=0,n)

    # ineq4 repeats ineq2, but with k=24, now with Kostochka-Stiebitz edge bound
    ineq4  = solve(5.0*((r-1)*n+2*(r-3))/2*(n-2)*(n-3)/((24-2)*(24-3))
             -203*n*(n-1)*(n-2)*(n-3)/(9*24*(24-1)*(24-3))- target>=0,n)

    p      = p_array[r-15]  # probability with which we include each vertex
    lem6   = solve(5.0*((r-1)/2*n+r-3)/p**2   # again uses K-S edge bound
             -203*n/(9*p**3)+406/(9*p**4)-0.5-target>=0,n)

    print(r, ": ", targ4, " ", lemC, " ",  round_interval(str(ineq2)), 
          " ", thm4, " ", round_interval(str(ineq3)), " ", 
	  round_interval(str(ineq4)), " ", round_interval(str(lem6)), 
	  " ","{:04.2f}".format(p), sep="")
\end{lstlisting}

\section*{Acknowledgments}
Thanks to the American Institute of Mathematics for its hospitality during a weeklong workshop in October 
2024 entitled ``Albertson conjecture and related problems'' where I began work on this project. Thanks 
also to a working group during that workshop for some early discussions
on this topic.  Special thanks to G\'{e}za T\'{o}th for helpful feedback on an earlier draft of this paper.
Incorporating his suggestions greatly improved the clarity of the proof of \Cref{thm1b}.
Finally, thanks to Beth Cranston for help with aspects of Python.

\bibliographystyle{habbrv}
{{\bibliography{albertson}}}
\end{document}